\newcommand{\WidestEntry}{$\cdots$}%
\newcommand{\SetToWidest}[1]{\makebox[\widthof{\WidestEntry}]{#1}}%
\renewcommand{\S}{\mathcal{S}}
\newcommand{\N}{\mathbb{N}}
\newcommand{\Z}{\mathbb{Z}}
\newcommand{\stat}{\sigma}   
\newcommand{\IS}{\mathsf{IS}}
\newcommand{\RW}{\mathsf{RW}}
\newcommand{\rw}{\mathsf{rw}}
\renewcommand{\S}{\mathcal{S}}      
\newcommand{\coeff}[2]{{#1}_{#2}}   
\newcommand{\set}{\mathrm{Set}}
\newcommand{\Eset}{\mathrm{ESet}}
\newcommand{\calib}[3]{[(#1,#2),i]}  
\newcommand\topstrut[1][1.2ex]{\setlength\bigstrutjot{#1}{\bigstrut[t]}}
\newcommand\botstrut[1][0.9ex]{\setlength\bigstrutjot{#1}{\bigstrut[b]}}
\theoremstyle{definition}
\newtheorem{theorem}{Theorem}[section]
\newtheorem{proposition}[theorem]{Proposition}
\newtheorem{lemma}[theorem]{Lemma}
\newtheorem{example}[theorem]{Example}
\newtheorem{corollary}[theorem]{Corollary}
\newtheorem{definition}[theorem]{Definition}
\title{Pattern expansions of permutation statistics}
\author[Ian Cavey]{Ian Cavey}
\address{Department of Mathematics, University of Illinois Urbana-Champaign, Urbana, IL, USA}
\email{cavey@illinois.edu}
\thanks{}
\author[Hugh Dennin]{Hugh Dennin$^*$}
\address{Department of Mathematics, The Ohio State University, Columbus, OH, USA}
\email{dennin.3@osu.edu}
\thanks{$^*$Research partially supported by NSF Grants DMS-2231565 and DMS-1945212.}
\author[Bridget Eileen Tenner]{Bridget Eileen Tenner$^\dagger$}
\address{Department of Mathematical Sciences, DePaul University, Chicago, IL, USA}
\email{bridget@math.depaul.edu}
\thanks{$^\dagger$Research partially supported by NSF Grant DMS-2054436.}
\date{}
\begin{document}

\begin{abstract}
    We study the expansions of permutation statistics in the basis of functions counting occurrences of a fixed pattern in a permutation. We show the finiteness of these pattern expansions for a class of permutation statistics including the higher moment statistics, generalizing a result of Berman and Tenner. We also give a combinatorial criterion for the positivity of pattern expansions. Using this criterion, we show that the pattern expansion of the number of reduced words of a permutation is positive and give an enumerative interpretation for the coefficients.
\end{abstract}

\maketitle

\section{Introduction}\label{section:intro}

A \textit{permutation statistic} is a function on $\S :=\bigsqcup_{n\geq 0} S_n$, the disjoint union of all finite permutation groups. Permutation statistics are ubiquitous, providing important metrics such as length, number of descents, and major index; see \cite{findstat} for an example of an important tool in this regard. A basic family of such statistics are the \textit{pattern-count functions} $[p]:\S\to \N$ indexed by permutations $p\in \S$ where for $w\in \S$, we write $[p](w)$ for the number of $p$-patterns in $w$ (see Definition~\ref{defn:pattern}).

Brändén and Claesson \cite{BrandenClaesson} observed that every permutation statistic has a unique expression as a (possibly infinite) linear combination of pattern-count functions (see also Section~\ref{section:tools}), which we call the \textit{pattern expansion} of the statistic. Moreover, the coefficients in this expansion are readily computable from the values of the statistic itself. 

In this paper, we develop the theory of pattern expansions of arbitrary permutation statistics.
Our perspective is that pattern expansions provide useful alternative representations of permutation statistics since they allow for the use of pattern-related tools and language to study a statistic of interest, even when that statistic had not initially appeared to relate to patterns.
For example, the expansion of a statistic could indicate that the statistic could have interesting properties when studied on particular classes of permutations, instead of on the full symmetric group, if many terms in the expansion are zero for such permutations.

We are particularly interested in statistics that are not, a priori, related to patterns, but whose pattern expansions have nice properties.
We give special attention to two appealing families of pattern expansions: those with only finitely many nonzero coefficients, and those with all nonnegative coefficients.
We will investigate the properties of permutations statistics in each of these two categories, and give non-trivial examples of each.

\subsection{Pattern-finite statistics}

We say that a permutation statistic is \textit{pattern-finite} if its pattern expansion has only finitely many nonzero coefficients. Pattern-finiteness appears to be a rare property among well-studied permutation statistics. Indeed, we observe in Proposition~\ref{prop: expected value} that the expected value of a pattern-finite statistic $\sigma$ on permutations of size $n$ is a polynomial function in $n$. This necessary condition can be used to show that many statistics of interest do not have finite pattern expansions.

Despite this, the following result due to Berman and Tenner \cite{BermanTenner} shows that some natural permutation statistics do exhibit surprising pattern-finiteness.

\begin{example}[\cite{BermanTenner}]\label{ex:variance} The \textit{variance} of a permutation, $V(w) = \sum_i (w(i)-i)^2$, has the finite pattern expansion:
$$V = 2\big([21] + [231] + [312] + [321]\big).$$
\end{example}

Motivated by this result, in Section~\ref{section:moments}, we give a generalization of this example, establishing the pattern-finiteness of a wide class of permutation statistics expressed in terms of marked permutations (see Definition~\ref{def: marked perm}). A special case of our Theorem~\ref{thm:averaging criterion} is that the higher moment statistics $\mu_m = \sum_i (w(i)-i)^m$
are pattern-finite for all $m\in \N$, and have a coefficient of zero on all pattern-count functions $[p]$ for $p\in S_k$ with $k>m+1$. Moreover, in Theorem~\ref{thm:moments are pattern-finite}, we give a formula for the coefficients in the expansion of $\mu_m$ in terms of Stirling numbers. For example, we find the following expansion for $\mu_3$ in Example~\ref{ex: moments}:
\[ \mu_3 = 6\big([312] - [231] + [4213]+[4123]+[4132]-[2431]-[2341]-[3241]\big). \]

For $m\geq 4$, we observe that certain linear combinations of the moments $\mu_1,\dots,\mu_m$, which we call the \textit{binomial moments} of a permutation, have simpler pattern expansions than the moment statistics themselves. Motivated by this observation, we investigate in Section~\ref{sec: higher moments} the properties of these binomial moment statistics, computing their pattern expansions and expected values. We hope that this serves as a case study for future such investigation and discovery of permutation statistics based on pattern expansions.

\subsection{Pattern-positive statistics}

We say that a permutation statistic is \textit{pattern-positive} if all coefficients in its pattern expansion are nonnegative. Every pattern-positive statistic $\stat$ is necessarily \textit{monotonic} with respect to pattern containment, meaning that $\stat(w)\ge \stat(p)$ whenever $w$ contains a $p$-pattern, since this inequality holds term-by-term in the pattern expansion of $\stat$. This necessary condition can be used to show that many statistics of interest are not pattern-positive.

A key property of any pattern-positive statistic $\stat$ is that one can obtain lower bounds for $\stat$ by truncating its pattern expansion. It is through this application that pattern expansions have appeared previously in the Schubert polynomial literature. Indeed, Weigandt \cite{Weigandt} showed that the Schubert polynomial $\mathfrak{S}_w$ satisfies $\mathfrak{S}_w(1,\dots,1)\geq 1+[132](w)$ and Gao \cite{Gao} strengthened this inequality by showing that $\mathfrak{S}_w(1,\dots,1)\geq 1+[132](w)+[1432](w)$. Gao then observed that both of these inequalities could be explained and further strengthened by the pattern-positivity of the statistic $\mathfrak{S}_w(1,\dots,1)$, the first several terms in the expansion being 
\[ \mathfrak{S}_w(1,\dots,1) = 1+[132]+[1432]+[12453]+[12534]+5[12543]+\cdots. \]
Several authors have since shown the nonnegativity of certain families of coefficients in this expansion \cite{Dennin,MeszarosTanjaya}, but the full pattern-positivity of $\mathfrak{S}_w(1,\dots,1)$ remains conjectural. 

In Section~\ref{section:reduced words} we show that pattern-positivity of any \emph{enumerative statistic} $\stat(w)=|\set(w)|$ follows from the existence of embeddings $\set(p)\hookrightarrow \set(w)$, indexed by pattern occurrences of $p$ in $w$, that satisfy a certain compatibility condition for intersections of patterns. Moreover, such a family of embeddings gives a positive combinatorial interpretation for the coefficients of the expansion, and in fact for every term in the computation of $\stat(w)$ from the pattern expansion of $\stat$.

Our main result, established using the strategy described above, is the pattern-positivity of $\rw(w)$, the number of reduced words for $w$ as a product of adjacent transpositions. The expansion of this statistic begins as follows:
\begin{align*}
    \rw  = 1 & + [321] + [2143] + [2413] + [2431] + [3142] + [3241] \\ 
    & + 2[3421] + [4132] + [4213] + 3[4231] + 2[4312] + 11[4321]+\cdots
\end{align*} 

As a consequence of this result, we obtain an infinite family of lower bounds for the number of reduced words $\rw(w)$ by truncating the pattern expansion of $\rw$. These are analogous to the bounds for $\mathfrak{S}_w(1,\dots,1)$ obtained by Weigandt and Gao. For example, keeping only the patterns in the expansion of $\rw$ of size at most $4$ we obtain the highly non-obvious inequality
\begin{align*}
    \rw  \ge  1 & + [321] + [2143] + [2413] + [2431] + [3142] + [3241] \\ 
    & + 2[3421] + [4132] + [4213] + 3[4231] + 2[4312] + 11[4321].
\end{align*}

Although the Schubert statistic $\mathfrak{S}_w(1,\dots,1)$ is closely related to reduced words, we have not been able to deduce the pattern-positivity of $\mathfrak{S}_w(1,\dots,1)$ from that of $\rw(w)$. Note, for example, that the vanishing of coefficients in the pattern expansion of either statistic does not imply the vanishing of the corresponding coefficient in the other. 

In Section~\ref{section:tools} of this work, we introduce the essential objects and properties that we will use in the paper. Section~\ref{section:moments} focuses on those pattern expansions that involve only finitely many nonzero coefficients, discussing properties of this class and presenting moment statistics as a relevant case study. With a similar framework, Section~\ref{section:reduced words} turns to those pattern expansions that involve only nonnegative coefficients. The case study for this important class of statistics is the number of reduced words of a permutation. We conclude the work in Section~\ref{section:future} with a survey of some of the many directions for further research on this topic.

\section{Essential tools}\label{section:tools}

Write $S_n$ for the set of permutations of $[1,n] = \{1,\dots,n\}$, and let $\S = \bigsqcup_{n\in \N} S_n$ denote the set of all permutations, where $S_0 = \{\varnothing\}$ contains only the empty permutation.
The \emph{size} of $w \in S_n$ is $|w| = n$. 
The building blocks of our work are permutation patterns and their occurrences. As such, we will represent permutations in one-line notation, writing $w \in S_n$ as $w(1)\cdots w(n)$. 

\begin{definition}\label{defn:pattern}
    Fix permutations $p \in S_k$ and $w \in S_n$. If $1\leq i_1 < \cdots < i_k\leq n$ are indices such that $w(i_1)\cdots w(i_k)$ is order isomorphic to $p(1)\cdots p(k)$ (i.e., the letters are in the same relative order), then $w$ contains a \emph{$p$-pattern} and $w(i_1)\cdots w(i_k)$ is a $p$-\emph{occurrence}, or an \emph{occurrence of $p$}, in $w$. 
    If $w$ has no occurrences of $p$, then $w$ \emph{avoids} $p$.
\end{definition}

We will sometimes want to look at the subword of a permutation appearing in particular positions, for which purpose we make the following definition.

\begin{definition}
    For $w\in S_n$ and $I = \{ i_1,\dots,i_h\}\subseteq [1,n]$ with $i_1<\cdots<i_h$, let $w|_I\in S_{h}$ be the permutation whose entries are in the same relative positions as the subword $w(i_1)\cdots w(i_h)$.
\end{definition}

A permutation that contains a $p$-pattern can contain many $p$-occurrences, and tracking that data is essential to our work.

\begin{definition}\label{defn:pattern-count}
    For any $p\in \S$, define the function
    $$[p]:\S\to \N$$
    sending a permutation $w$ to the number of $p$-occurrences in $w$. This $[p]$ is a \emph{pattern-count function}, or simply a \emph{pattern-count}.    
\end{definition}

Definition~\ref{defn:pattern-count} includes the pattern-count $[\varnothing]:\S\to\N$, indexed by the empty permutation $\varnothing \in S_0$. The function $[\varnothing]$ takes the value $1$ on all permutations. Similarly, for $1\in S_1$ we have $[1](w) = |w|$. For arbitrary $p\in S_k$ and $w\in S_n$, we have $[p](w) = 0$ if and only if $w$ avoids $p$.

Some well-known examples of permutation statistics are a permutation's size, length, number of descents, major index, and number of fixed points. We will typically consider permutation statistics such as these valued in $\N\subseteq \Z$, but our current discussion will make sense for statistics taking values in any (unital, commutative) ring $\Lambda$.

\begin{definition}\label{defn:permutation statistic}
    A \textit{$\Lambda$-valued permutation statistic} is any function $\stat\colon \S\to \Lambda$.
\end{definition}

Each pattern-count $[p]$ can be thought of as a permutation statistic taking values in $\Lambda$ by composing $[p]$ with the unique unital ring map $\Z \to \Lambda$.
We will study arbitrary permutation statistics by writing them as (possibly infinite) linear combinations of pattern-counts $[p]$. The following proposition asserts that this is well-defined.

\begin{proposition}[{cf.~\cite[\textsection 1.2]{BrandenClaesson}}]\label{prop:every statistic has a pattern expansion}
    Every permutation statistic $\stat\colon \S\to \Lambda$ can be expressed uniquely as a (possibly infinite) $\Lambda$-linear combination of pattern-count functions.
    That is, there is a unique collection of coefficients $\coeff{\stat}{p}\in \Lambda$ indexed by patterns $p\in \S$ such that
    $$
        \stat(w) = \sum_{p \in \S} \coeff{\stat}{p} \cdot [p](w)
    $$
    for all permutations $w\in \S$. 
\end{proposition}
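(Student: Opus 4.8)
The plan is to establish existence and uniqueness of the coefficients $\coeff{\stat}{p}$ by an inductive/triangularity argument on the size of permutations. The key structural observation is that for $p, w \in \S$, the pattern-count $[p](w)$ is nonzero only when $|p| \le |w|$, and that $[p](w) = 1$ when $p = w$ (there is exactly one occurrence of $w$ in itself, namely the full word), while $[p](w) = 0$ whenever $|p| = |w|$ but $p \neq w$. Thus if we order $\S$ by any linear extension of the size partial order, the "matrix" $\big([p](w)\big)_{p,w}$ is block upper-triangular with respect to size, with identity blocks on the diagonal indexed by each $S_n$. This is exactly the setup in which a unique "inverse" expansion exists, even allowing infinitely many permutations, because the sum $\sum_{p} \coeff{\stat}{p}[p](w)$ is automatically \emph{finite} for each fixed $w$: only the $p$ with $|p| \le |w|$ contribute.

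Concretely, I would define the coefficients recursively by induction on $|p|$. Set $\coeff{\stat}{\varnothing} = \stat(\varnothing)$ (using $[\varnothing] \equiv 1$). Assuming $\coeff{\stat}{q}$ has been defined for all $q$ with $|q| < |p|$, define
$$\coeff{\stat}{p} = \stat(p) - \sum_{\substack{q \in \S \\ |q| < |p|}} \coeff{\stat}{q}\cdot [q](p).$$
The sum on the right is finite (finitely many permutations of each size below $|p|$), so this is well-defined in $\Lambda$. Then one checks by induction on $|w|$ that $\stat(w) = \sum_{p} \coeff{\stat}{p}[p](w)$: splitting the sum into terms with $|p| < |w|$, the single term with $p = w$ (contributing $\coeff{\stat}{w}$ since $[w](w) = 1$), and terms with $|p| > |w|$ or $|p| = |w|, p \neq w$ (all contributing $0$), the defining relation for $\coeff{\stat}{w}$ rearranges exactly into the claimed identity.

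For uniqueness, suppose $(\coeff{\stat}{p})_p$ and $(\coeff{\stat}{p}')_p$ are two collections satisfying the identity for all $w$; let $d_p = \coeff{\stat}{p} - \coeff{\stat}{p}'$, so $\sum_p d_p [p](w) = 0$ for all $w$. I would show $d_p = 0$ for all $p$ by induction on $|p|$: evaluating at $w = p$, the identity becomes $d_p = -\sum_{|q| < |p|} d_q [q](p) = 0$ by the inductive hypothesis (terms with $|q| > |p|$ or $|q| = |p|, q \neq p$ vanish, and $[p](p) = 1$), completing the induction.

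I do not anticipate a genuine obstacle here; the only subtlety worth stating carefully is the \emph{finiteness of each evaluation} $\sum_p \coeff{\stat}{p}[p](w)$ — this is what makes "possibly infinite linear combination" meaningful without any topology or convergence hypothesis, and it rests squarely on the fact that $[p](w) = 0$ whenever $|p| > |w|$, together with $|S_n| < \infty$. Everything else is a routine triangular-inversion argument, and I would present it compactly rather than belaboring the bookkeeping.
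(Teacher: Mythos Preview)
Your proposal is correct and takes essentially the same approach as the paper: both exploit that the pattern-count values $[p](w)$ form a unitriangular system with respect to permutation size, so a unique inverse exists. The paper packages this as inverting an infinite lower-triangular matrix $M$ with $1$s on the diagonal, while you write out the recursion and induction explicitly; these are the same argument in different dress.
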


When $\stat$ and $\coeff{\stat}{p}$ are related as in Proposition~\ref{prop:every statistic has a pattern expansion}, we write $\stat = \sum_{p\in \S} \coeff{\stat}{p} \cdot [p]$ and call this expression the \textit{pattern expansion} of the statistic $\stat$.
For a fixed $w\in \S$, the value $[p](w)$ is nonzero for only finitely many $p\in \S$ since $[p](w) = 0$ if $|p| > |w|$, so evaluating a pattern expansion at any fixed $w$ makes sense.

Although proved elsewhere, we include our own proof of Proposition~\ref{prop:every statistic has a pattern expansion} here because it sets up a perspective that we find helpful in later discussions.

\begin{proof}[Proof of Proposition~\ref{prop:every statistic has a pattern expansion}]
    Let $M$ be the infinite matrix with columns indexed by $p\in \S$ and rows indexed by $w\in \S$, with corresponding entries equal to $[p](w)$. Order the rows and columns of $M$ by any order on $\S$ that refines the grading by permutation size $S_0,S_1,S_2,\dots$. For any $w \in S_n$ and $|p| \ge n$, the value $[p](w)$ is equal to $0$ unless $p=w$, in which case $[p](w)=1$. This means that $M$ is lower-triangular with $1$s along the main diagonal. In particular, the matrix $M$ is invertible and the entries of $M^{-1}$ are integers.

    Given a statistic $\stat\colon \S\to \Lambda$,
    let $s$ denote the column vector with entries $\stat(w)$ indexed by $w\in \S$ in the same order as in $M$.
    By the previous observations about $M$, there is a unique solution $c$ to the matrix equation $Mc=s$ whose entries are elements of $\Lambda$ indexed by $\S$.
    Letting $\coeff{\stat}{p}$ denote the entry of $c$ indexed by $p\in \S$, the matrix equation $Mc=s$ is equivalent to the expression
    $\stat=\sum_{p\in \S} \coeff{\stat}{p} \cdot [p].$
\end{proof}

Throughout this work, we will maintain the notation used above to represent coefficients in pattern expansions.

\begin{definition}
    Given a permutation statistic $\stat\colon \S \to \Lambda$, let $\coeff{\stat}{p}$ be the coefficient of $[p]$ in the pattern expansion of $\stat$.
\end{definition}

The matrix $M$ appearing in the proof of Proposition~\ref{prop:every statistic has a pattern expansion} can be thought of as the change of basis matrix between the basis of pattern-count functions $[p]$ and the basis of indicator functions $\mathbbm{1}_p$ defined by 
$$\mathbbm{1}_p(w) = \begin{cases}
    1 & \text{if } p=w, \text{ and}\\
    0 & \text{otherwise}.
\end{cases}$$
This description is useful computationally, as the restriction of the matrices $M$ and $M^{-1}$ to permutations of size at most $n$ can be computed independently of any chosen statistic. Then, the coefficients $\coeff{\stat}{p}$ on patterns $p$ of size at most $n$ in the pattern expansion of any statistic $\stat$ can be computed directly from these matrices and the vector of outputs of $\stat$ on permutations.

\begin{example}
    The matrix equation $Mc=s$, referred to in the proof of Proposition~\ref{prop:every statistic has a pattern expansion}, for the statistic \textit{number of descents} is as follows.
$$\begin{blockarray}{ccccccccccclccc}
    & \scriptstyle{[\varnothing]} & \scriptstyle{[1]} & \scriptstyle{[12]} & \scriptstyle{[21]} & \scriptstyle{[123]} & \scriptstyle{[132]} & \scriptstyle{[213]} & \scriptstyle{[231]} & \scriptstyle{[312]} & \scriptstyle{[321]} & \cdots\\
    \\[-2ex]
    \begin{block}{r[ccccccccccl][r]c[c]}
    \scriptstyle{\varnothing\ }\  & \SetToWidest{1} & \SetToWidest{0} & \SetToWidest{0} & \SetToWidest{0} & \SetToWidest{0} & \SetToWidest{0} & \SetToWidest{0} & \SetToWidest{0} & \SetToWidest{0} & \SetToWidest{0} & \cdots & 0\ \ && \ 0 \ \topstrut \\
    \scriptstyle{1\ } & 1 & 1 & 0 & 0 & 0 & 0 & 0 & 0 & 0 & 0 & \cdots & 0\ \ && 0 \\
    \scriptstyle{12\ } & 1 & 2 & 1 & 0 & 0 & 0 & 0 & 0 & 0 & 0 & \cdots & 0 \ \ && 0 \\
    \scriptstyle{21\ } & 1 & 2 & 0 & 1 & 0 & 0 & 0 & 0 & 0 & 0 & \cdots & 1 \ \ && 1  \\
    \scriptstyle{123\ } & 1 & 3 & 3 & 0 & 1 & 0 & 0 & 0 & 0 & 0 & \cdots & 0 \ \ && 0 \\
    \scriptstyle{132\ } & 1 & 3 & 2 & 1 & 0 & 1 & 0 & 0 & 0 & 0 & \cdots & 0 \ \ & = & 1 \\
    \scriptstyle{213\ } \ & 1 & 3 & 2 & 1 & 0 & 0 & 1 & 0 & 0 & 0 & \cdots & 0\ \ && 1 \\
    \scriptstyle{231\ } & 1 & 3 & 1 & 2 & 0 & 0 & 0 & 1 & 0 & 0 & \cdots & \,-1\ \ && 1 \\
    \scriptstyle{312\ } & 1 & 3 & 1 & 2 & 0 & 0 & 0 & 0 & 1 & 0 & \cdots & \,-1\ \ && 1 \\
    \scriptstyle{321\ } & 1 & 3 & 0 & 3 & 0 & 0 & 0 & 0 & 0 & 1 & \cdots & \,-1\ \ && 2 \\
    \vdots\phantom{\ \ } & \vdots & \vdots & \vdots & \vdots & \vdots & \vdots & \vdots & \vdots & \vdots & \vdots & \ddots  & \vdots \ \ \, && \vdots \botstrut\\
    \end{block}
\end{blockarray}\vspace*{-1.25\baselineskip}.$$

\noindent Row- and column-indexing in this matrix is identified for patterns of size at most $3$, with rows indexing the function input $w$ and columns indexing the terms $[p]$ in the pattern expansion. Note that the blocks of the matrix corresponding to $[p](w)$ for $p$ and $w$ of the same size contain identity matrices, and the entire matrix is lower-triangular. From this matrix equation we obtain the expansion for the number of descents of a permutation: 
\[ \mbox{des}(w) = \big([21]-[231]-[312]-[321]+\cdots\big)(w).\]
\end{example}

The entries of the matrix $M^{-1}$ can be given explicitly, and the rearranged matrix equation $c = M^{-1}s$ gives the following useful expression for the coefficients $\coeff{\stat}{p}$ of a statistic $\stat$ directly in terms its values.

\begin{lemma}\label{lem:coeff in terms of stat}
    For any permutation statistic $\stat$ and $w\in S_n$, the coefficient $\coeff{\stat}{w}$ is given by
    \begin{align*}
        \coeff{\stat}{w} & = \sum_{I\subseteq [1,n]} (-1)^{n-|I|}\cdot\stat(w|_I)\\
        & = \sum_{p \in \S} (-1)^{n-|p|} \cdot [p](w)\cdot\stat(p).
    \end{align*}
\end{lemma}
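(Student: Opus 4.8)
The plan is to reduce the statement to Möbius inversion over the Boolean lattice of subsets of $[1,n]$. Fix $w\in S_n$ throughout. The first step is to record, for \emph{every} $J\subseteq[1,n]$, the identity
\[
    \stat(w|_J) \;=\; \sum_{I\subseteq J} \coeff{\stat}{w|_I}.
\]
To obtain this, I would start from the pattern expansion $\stat(w|_J)=\sum_{p\in\S}\coeff{\stat}{p}\cdot[p](w|_J)$ supplied by Proposition~\ref{prop:every statistic has a pattern expansion}, and then observe that $[p](w|_J)$ counts exactly those subsets $I\subseteq J$ with $w|_I=p$. This rests on the routine compatibility of iterated restriction: if $J=\{j_1<\cdots<j_h\}$ and $I'\subseteq[1,h]$, then $(w|_J)|_{I'}=w|_{J'}$ where $J'=\{\,j_i : i\in I'\,\}\subseteq J$, and $I'\mapsto J'$ is a bijection from subsets of $[1,h]$ to subsets of $J$. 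Grouping the sum over $p$ by the subword $w|_I$ it represents then gives the displayed identity.

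Next, regard both sides as functions on the Boolean lattice of subsets of $[1,n]$: set $f(J):=\stat(w|_J)$ and $g(J):=\coeff{\stat}{w|_J}$, so that the identity above reads $f(J)=\sum_{I\subseteq J}g(I)$. Möbius inversion on the Boolean lattice (equivalently, ordinary inclusion–exclusion) yields $g(J)=\sum_{I\subseteq J}(-1)^{|J|-|I|}f(I)$, and specializing to $J=[1,n]$ gives
\[
    \coeff{\stat}{w} \;=\; \sum_{I\subseteq[1,n]}(-1)^{n-|I|}\cdot\stat(w|_I),
\]
which is the first asserted formula. (Alternatively, one could phrase this as computing the entries of $M^{-1}$ directly, verifying $(M^{-1})_{w,p}=(-1)^{|w|-|p|}[p](w)$ by the same sign-reversing sum $\sum_{v\in\S}(-1)^{|w|-|v|}[v](w)[p](v)=\delta_{w,p}$; this is the same computation in matrix dress.)

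For the second formula, I would partition the subsets $I\subseteq[1,n]$ according to the pattern $p=w|_I\in\S$ they induce: for each $p\in S_k$ there are precisely $[p](w)$ such subsets, and each has $|I|=k=|p|$. Since $[p](w)=0$ whenever $|p|>n$, the rewritten sum $\sum_{p\in\S}(-1)^{n-|p|}\cdot[p](w)\cdot\stat(p)$ is finite and equals the previous one.

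The argument is essentially bookkeeping, and I do not anticipate a substantive obstacle. The only points deserving care are the compatibility of iterated restriction used to rewrite $[p](w|_J)$, and the observation that the recursive relations among the values $\stat(w|_I)$ are exactly a Boolean-lattice summation to which Möbius inversion applies.
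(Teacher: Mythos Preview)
Your proof is correct and follows essentially the same approach as the paper: rewrite the pattern expansion as $\stat(w|_J)=\sum_{I\subseteq J}\coeff{\stat}{w|_I}$, invert via inclusion--exclusion on the Boolean lattice, and then regroup by pattern to obtain the second formula. You are in fact slightly more explicit than the paper, which states the summation identity only at $J=[1,n]$ and then invokes ``an inclusion-exclusion type argument'' without spelling out that the identity is needed for all $J$; your care with iterated restriction and the full M\"obius inversion fills in exactly what the paper leaves implicit.
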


Note that the later sum is finite because $[w](p) = 0$ if $w\in S_n$ with $n>k$.

\begin{proof}
    When applied to the permutation $w$, the pattern expansion formula for $\stat$ given in Proposition~\ref{prop:every statistic has a pattern expansion}, which defines the coefficients $\coeff{\stat}{p}$, can now be rewritten as 
    \begin{equation*}
        \stat(w) = \sum_{p \in \S} \ \sum_{\substack{I \subseteq [1,n] \\ w|_I = p}} \coeff{\stat}{p} 
        = \sum_{I\subseteq [1,n]} \coeff{\stat}{w|_I}.
    \end{equation*} 
    By an inclusion-exclusion type argument, one sees that these equations are satisfied by
    \begin{equation*}
    \coeff{\stat}{w} = \sum_{I\subseteq [1,n]} (-1)^{n-|I|}\cdot\stat(w|_I). 
    \end{equation*}
    The second expression is obtained by grouping together all subsets $I\subseteq [1,n]$ for which $w|_I$ is some fixed pattern $p$, and writing the sum over all possible $p$. The number $I\subseteq [1,n]$ for which $w|_I = p$ is exactly $[p](w)$, which completes the proof.
\end{proof}

We note that while Definitions~\ref{defn:pattern-count} and~\ref{defn:permutation statistic} and Proposition~\ref{prop:every statistic has a pattern expansion} are phrased in the language of classical pattern containment, they could just as easily be written for any version of pattern containment.
For example, an analogous argument to that presented in the proof of Proposition~\ref{prop:every statistic has a pattern expansion} would show that every statistic can be expressed uniquely as an integer-weight sum of consecutive-pattern-counts, as well. The present work aims to study and build a foundation of classical pattern expansions, and we leave analogous efforts for other versions to other works and, perhaps, authors.

\subsection{The algebras of permutation statistics and pattern expansions}

We close this section with a brief detour to discuss algebras associated to the objects we are studying in this work.

Write $\Lambda^\S$ for the collection of permutation statistics valued in $\Lambda$.
Under the operations of pointwise addition and multiplication, this $\Lambda^\S$ becomes a commutative algebra over $\Lambda$ with unit $[\varnothing]$ (the constant function $1$).

One special class of permutation statistics is the collection of pattern-finite statistics: those statistics whose pattern expansion has finitely many nonzero terms.
The collection of pattern-finite statistics $\Lambda\S$ forms a $\Lambda$-submodule of $\Lambda^\S$.
In fact, more is true.

\begin{proposition}[{\cite{Vargas}}]\label{prop:free algebra}
    The collection of pattern-finite statistics $\Lambda\S$ is a $\Lambda$-subalgebra of $\Lambda^\S$. 
\end{proposition}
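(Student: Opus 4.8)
The plan is to reduce everything to showing that the pointwise product of two pattern-count functions is again pattern-finite. Since the excerpt already records that $\Lambda\S$ is a $\Lambda$-submodule of $\Lambda^\S$, and since the multiplicative unit $[\varnothing]$ of $\Lambda^\S$ visibly has a finite pattern expansion, it suffices to show $\Lambda\S$ is closed under pointwise multiplication. Writing two pattern-finite statistics as finite $\Lambda$-linear combinations $\sigma = \sum_{p\in F}\sigma_p[p]$ and $\tau = \sum_{q\in G}\tau_q[q]$ and using bilinearity of pointwise multiplication gives $\sigma\tau = \sum_{p\in F,\,q\in G}\sigma_p\tau_q\,[p][q]$; so the whole claim follows once each $[p][q]$ is shown to be pattern-finite.

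For the core step I would fix $p\in S_j$ and $q\in S_k$ and exhibit an explicit finite pattern expansion of $[p][q]$ by a ``pattern merge'' count. For $r\in S_m$, set
$$a^{p,q}_r = \#\{(I',J') : I',J'\subseteq[1,m],\ I'\cup J'=[1,m],\ r|_{I'}=p,\ r|_{J'}=q\}.$$
Because $m = |I'\cup J'|\le |I'|+|J'| = j+k$ for any pair counted here, $a^{p,q}_r = 0$ for all but finitely many $r$. I then claim $[p][q] = \sum_{r\in\S} a^{p,q}_r\,[r]$, which by the uniqueness half of Proposition~\ref{prop:every statistic has a pattern expansion} establishes pattern-finiteness.

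To verify this identity I would evaluate both sides at an arbitrary $w\in S_n$. The left side $[p](w)\cdot[q](w)$ counts pairs $(I,J)$ of subsets of $[1,n]$ with $w|_I = p$ and $w|_J = q$. Given such a pair, let $K = I\cup J = \{\kappa_1<\cdots<\kappa_m\}$, let $r = w|_K$, and let $I' = \{t:\kappa_t\in I\}$ and $J' = \{t:\kappa_t\in J\}$; transitivity of taking subwords gives $(w|_K)|_{I'} = w|_I = p$ and $(w|_K)|_{J'} = w|_J = q$, while $I'\cup J' = [1,m]$ by construction, so the data $(r, K, (I',J'))$ amounts to an occurrence $K$ of $r$ in $w$ together with a pair $(I',J')$ counted by $a^{p,q}_r$. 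Conversely, an occurrence $K = \{\kappa_1<\cdots<\kappa_m\}$ of some $r\in\S$ in $w$ together with a pair $(I',J')$ counted by $a^{p,q}_r$ reconstructs $(I,J)$ via $I = \{\kappa_t : t\in I'\}$ and $J = \{\kappa_t : t\in J'\}$. These two assignments are mutually inverse, so $[p](w)\cdot[q](w) = \sum_{r\in\S} a^{p,q}_r\cdot[r](w)$, and since $w$ was arbitrary the identity holds as statistics.

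The only genuine obstacle is the bookkeeping in this bijection: one must be careful that restriction of a restriction is a restriction, i.e.\ $(w|_K)|_{I'} = w|_I$ whenever $I\subseteq K$ and $I'$ is the set of relative positions of $I$ inside $K$, and one must check that the condition $I'\cup J' = [1,m]$ in the definition of $a^{p,q}_r$ is exactly what forces $K$ to be the full support $I\cup J$ of the merged pair (so that no pair is counted for more than one $(r,K)$). Once that is nailed down, the rest is formal, and combined with the first paragraph it shows $\Lambda\S$ is closed under multiplication and contains the unit $[\varnothing]$, hence is a $\Lambda$-subalgebra of $\Lambda^\S$.
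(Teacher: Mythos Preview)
Your proposal is correct and is essentially the argument the paper attributes to Vargas: the paper does not give its own proof of this proposition but records the same formula $[p][q]=\sum_r d^{pq}_r[r]$ with $d^{pq}_r$ defined exactly as your $a^{p,q}_r$, and notes that $d^{pq}_r=0$ once $|r|>|p|+|q|$. Your bijective verification of this identity fills in precisely what the paper leaves implicit.
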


It follows from Proposition~\ref{prop:free algebra} that the product of two pattern-count functions $[p]$ and $[q]$ is a \textit{finite} linear combination of other pattern-count functions $[r]$; that is, it is a statistic with a pattern-finite expansion. In particular, Vargas showed for $p,q\in \S$ that \[
    [p][q] = \sum_{r\in \S} d^{pq}_r \cdot [r]
\]
where $d^{pq}_r\in \N$ denotes the number of ways to write $[1,|r|] = I\cup J$ as the union of two (not necessarily disjoint) subsets such that $I$ indexes a $p$-occurrence in $r$ and $J$ indexes a $q$-occurrence in $r$.
Notice that $d^{pq}_r = 0$ if $|p| + |q| < |r|$, so this sum is finite.

\section{Pattern-finite expansions}\label{section:moments}

In a sense, the optimal family of statistics to study via pattern-count functions are those with pattern-finite expansions.
Having a finite pattern expansion gives the statistic a direct combinatorial pattern-related meaning.
An interesting question which several authors have studied (for instance, see \cite{BabsonSteingrimsson,BermanTenner}) is whether or not a given permutation statistic of interest is pattern-finite.

In this section, we give a necessary criterion for a permutation statistic to be pattern-finite in terms of its expected value. We also establish the pattern-finiteness of a large category of permutation statistics expressed in terms of marked permutations. As a case study, we show that the higher moment statistics are pattern-finite, generalizing a result of~\cite{BermanTenner}, and study their pattern expansions.

We begin with a necessary criterion for pattern-finiteness.

\begin{proposition}\label{prop: expected value}
    For $\sigma$ a finite linear combination of pattern-counts $[p]$ with all patterns of size at most $d$, the expected value of $\sigma$ on permutations in $S_n$ is a polynomial function in $n$ of degree at most $d$.
\end{proposition}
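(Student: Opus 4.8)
The plan is to reduce the claim to a single computation: the expected value of a pattern-count $[p]$ on $S_n$, where $p \in S_k$, is a polynomial in $n$ of degree exactly $k$. Since $\sigma = \sum_{|p| \le d} \coeff{\sigma}{p} [p]$ is a finite linear combination, linearity of expectation then gives that the expected value of $\sigma$ on $S_n$ is a finite $\Lambda$-linear combination of polynomials each of degree at most $d$, hence itself a polynomial of degree at most $d$.

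\medskip

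So the key step is to compute $\mathbb{E}_{w \in S_n}\big([p](w)\big)$. First I would fix $p \in S_k$ and count, over all $w \in S_n$, the total number of $p$-occurrences, i.e. $\sum_{w \in S_n} [p](w)$. By a double-counting argument, this equals $\binom{n}{k}$ times the number of permutations $w \in S_n$ with a $p$-occurrence in a fixed choice of $k$ positions and values — but more cleanly: for each $k$-element set $I$ of positions and each $k$-element set of values, there is exactly one way to place those values in those positions in the relative order dictated by $p$, and the remaining $n-k$ values can be arranged on the remaining $n-k$ positions in $(n-k)!$ ways. Hence
\[
    \sum_{w \in S_n} [p](w) = \binom{n}{k}^2 k! \,(n-k)! = \binom{n}{k} \frac{n!}{(n-k)!} \cdot \frac{(n-k)!}{1}\bigg/ \text{(bookkeeping)},
\]
and dividing by $|S_n| = n!$ yields
\[
    \mathbb{E}_{w \in S_n}\big([p](w)\big) = \binom{n}{k} \cdot \frac{1}{k!} \cdot \frac{n!/(n-k)!}{n!/(n-k)!} = \frac{1}{k!}\binom{n}{k}\frac{n!}{(n-k)!}\cdot\frac{(n-k)!}{n!},
\]
which after simplification is $\dfrac{1}{k!}\dbinom{n}{k}$; in any case, this is a polynomial in $n$ of degree $k$ (for $n \ge k$, and it is $0 < $ the polynomial's value has no effect for $n<k$ since the polynomial $\binom{n}{k}$ already vanishes there). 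I will double-check the exact constant, but the essential point is only that the answer is a polynomial in $n$ of degree exactly $k$; alternatively one can invoke Proposition~\ref{prop:free algebra} or argue directly that $[p](w)$ is a sum of $\binom{n}{k}$ indicator random variables, each with the same expectation $1/k!$ by symmetry, giving $\mathbb{E}([p]) = \binom{n}{k}/k!$ immediately.

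\medskip

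Having this, the proof concludes: write $\sigma = \sum_{p} \coeff{\sigma}{p}[p]$ with the sum over finitely many $p$ of size at most $d$, so
\[
    \mathbb{E}_{w\in S_n}\big(\sigma(w)\big) = \sum_{p} \coeff{\sigma}{p}\, \mathbb{E}_{w\in S_n}\big([p](w)\big) = \sum_{p} \coeff{\sigma}{p}\cdot \frac{1}{|p|!}\binom{n}{|p|},
\]
which is a $\Lambda$-linear combination of the polynomials $\binom{n}{k}$ for $k = 0, 1, \dots, d$, hence a polynomial in $n$ of degree at most $d$.

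\medskip

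I do not anticipate a serious obstacle here; the only mild subtlety is being careful that "expected value on $S_n$" is only meaningful for $n$ large enough that $S_n$ is nonempty (all $n \ge 0$, so this is a non-issue), and that the degree bound is an inequality rather than an equality because cancellation among the leading terms $\coeff{\sigma}{p}/d!$ over the size-$d$ patterns $p$ is possible. The cleanest exposition uses the symmetry/indicator-variable computation of $\mathbb{E}([p])$ rather than the raw double-count, so that is the route I would write up.
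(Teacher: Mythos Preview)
Your proposal is correct and follows exactly the paper's approach: reduce by linearity of expectation to the computation $\mathbb{E}_{w\in S_n}([p]) = \tfrac{1}{k!}\binom{n}{k}$ for $p\in S_k$, which is a degree-$k$ polynomial in $n$. Your indicator-variable argument for that expectation is the clean route and matches the paper's one-line justification; just drop the muddled intermediate display and write up the symmetry argument directly.
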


\begin{proof}
     This follows from linearity of the expected value and the observation that for any $p\in S_k$, the expected value of the pattern-count function $[p]$ on $S_n$ is $\frac{1}{k!}\binom{n}{k}$, a polynomial in $n$ of degree $k$.
\end{proof}

This necessary condition for pattern-finiteness is sufficient to show that many statistics of interest are not pattern-finite, even using very weak bounds for the expected value. 

\begin{example}
    The statistic $\rw(w)$, the number of reduced words of $w$, is not pattern-finite. Indeed, Stanley \cite{Stanley} showed that the number of reduced words of the longest word $w_0\in S_n$ is 
    \[ \frac{\binom{n}{2}!}{1^{n-1}3^{n-2}5^{n-3}\cdots (2n-3)^1}. \]
    This quantity divided by $n!$ is therefore a lower bound 
    for the expected number of reduced words for a permutation in $S_n$, and one can show that this lower bound has faster than exponential growth in $n$.
\end{example}

Even if the expected value of a statistic on $S_n$ is polynomial in $n$, the statistic can fail to be pattern-finite. For example, the expected \textit{number of descents} of a permutation in $S_n$ is $(n-1)/2$, but this statistic has an infinite pattern expansion~\cite{BrandenClaesson}.

We now turn to the pattern-finiteness of a family of permutation statistics related to the \emph{calibrated} patterns of \cite{Tenner-prisms} which refine the usual pattern-count functions $[p]$.
As a prototypical example, recall that the \textit{code} of a permutation $w$ is the sequence $(c_1(w),c_2(w),\dots)$ where $c_i(w) := \#\{j>i \mid w(i)>w(j)\}$.
This refines the length of $w$ in that $\ell(w) = [21](w) = \sum_{i\geq 1} c_i(w)$, which follows by regarding each $c_i(w)$ as enumerating the occurrences $w(i)w(j)$ of $21$ in $w$.
We generalize these ideas to other pattern-count functions $[p]$ using the language of marked permutations.

\begin{definition}\label{def: marked perm}
A \textit{marked permutation} is a pair $(p,h)$ where $p\in \S$ and $1\leq h\leq |p|$.
\end{definition}

Write $\S'$ for the set of marked permutations.
We will often notate marked permutations $(p,h)\in \S'$ by writing $p$ in one-line notation and underlining the value at position $h$.
For instance, by $2\underline{3}1$ we mean the marked permutation $(231,2)$.

\begin{definition}
Fix $i\geq 1$ and $(p,h)\in \S'$.
\begin{itemize}
    \item
    An \textit{$i$-calibrated} occurrence of $(p,h)$ in $w\in \S$ is an occurrence $w(i_1) \cdots w(i_{|p|})$ of $p$ in $w$ satisfying $i_h = i$.
    \item
    Let $\calib{p}{h}{i}\colon \S \rightarrow \mathbb{N}$ denote the permutation statistic sending each $w\in \S$ to the number of $i$-calibrated occurrences of $(p,h)$ in $w$.
    Call $\calib{p}{h}{i}$ an \emph{$i$-calibrated pattern-count function}, or simply an \emph{$i$-pattern-count}.
\end{itemize}
\end{definition}

With this notation, permutation codes can be written as $c_i = [\underline{2}1,i]$, so our refinement of the length statistic becomes $[21] = \sum_{i\geq 1} [\underline{2}1,i]$.
This observation generalizes in the obvious way: for any $p\in\S$, each $1\leq h\leq |p|$ gives a decomposition $[p] = \sum_{i\geq1} \calib{p}{h}{i}$.

As a further example, we have $[2\underline{3}1,4](153642) = 3$ because $362, 562, 564$ are the only $231$-occurrences $w(i_1)w(i_2)w(i_3)$ in $w$ satisfying $i_2 = 4$.

Say that a permutation statistic is \emph{$i$-pattern-finite} if it can be written as a finite linear combination of $i$-pattern-count functions.
Write $\Lambda\S'_i$ for the subspace of $\Lambda^\S$ consisting of $i$-pattern-finite statistics.
The next proposition shows that each $\Lambda\S'_i$ is a subalgebra of $\Lambda^\S$ (cf.~Proposition~\ref{prop:free algebra}) and, moreover, that products can be computed independent of $i$.

\begin{proposition}
    \label{prop:i-pattern products}
    There exist nonnegative coefficients $d_*^{**}\in \Z_{\geq0}$, triply indexed by $\S'$, satisfying the following:
    \begin{enumerate}
    \item[(i)]
    $d_{(r,h_3)}^{(p,h_1)(q,h_2)} = 0$ whenever $|p| + |q| \leq |r|$.
    
    \item[(ii)]
    For any $i\geq 1$, the identity
    \begin{equation}
        \label{eq:i-pattern products}
        \calib{p}{h_1}{i} \cdot \calib{q}{h_2}{i} = \sum_{\substack{(r,h_3) \in \S'}} d_{(r,h_3)}^{(p,h_1)(q,h_2)} \cdot \calib{r}{h_3}{i}
    \end{equation}
    holds in $\Lambda\S'_i$. 
    In particular, each $\Lambda\S'_i$ is a subalgebra of $\Lambda^\S$.
    \end{enumerate}
\end{proposition}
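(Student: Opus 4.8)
The plan is to mirror the proof of Proposition~\ref{prop:free algebra} (Vargas's product formula for pattern-counts) at the refined, calibrated level, and then extract the coefficients from the single case $i=1$ so that the identity holds uniformly in $i$. Concretely, fix $(p,h_1),(q,h_2)\in\S'$ and $w\in S_n$. The value $\calib{p}{h_1}{i}(w)\cdot\calib{q}{h_2}{i}(w)$ counts ordered pairs $(P,Q)$ where $P$ indexes a $p$-occurrence in $w$ with $P$'s $h_1$-th element equal to $i$, and $Q$ indexes a $q$-occurrence in $w$ with $Q$'s $h_2$-th element equal to $i$. For each such pair, set $I=P\cup Q\subseteq[1,n]$, let $r=w|_I$, and let $h_3$ be the position within $I$ of the common index $i$ (which lies in $I$ since it lies in both $P$ and $Q$, and which is the same whether computed from $P$ or from $Q$). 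Grouping pairs by the resulting marked permutation $(r,h_3)$ gives
\begin{equation*}
    \calib{p}{h_1}{i}(w)\cdot\calib{q}{h_2}{i}(w) = \sum_{(r,h_3)\in\S'}\ \sum_{\substack{I\subseteq[1,n]\\ w|_I=r}}\ e^{(p,h_1)(q,h_2)}_{(r,h_3),\,w,I},
\end{equation*}
where $e^{(p,h_1)(q,h_2)}_{(r,h_3),w,I}$ is the number of ways to write $I=P\cup Q$ with $P$ indexing a $p$-occurrence, $Q$ indexing a $q$-occurrence, the $h_1$-th element of $P$ equal to the $h_2$-th element of $Q$ equal to the element of $I$ in position $h_3$. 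The key point, exactly as in Vargas's argument, is that this inner count depends only on the pair of marked permutations $(p,h_1),(q,h_2)$ and on $(r,h_3)$ — not on the ambient $w$ or on the specific embedding $I$ — since everything is determined by the order-isomorphism type. So I would \emph{define} $d^{(p,h_1)(q,h_2)}_{(r,h_3)}$ to be this intrinsic count (equivalently: take $w=r$ and $I=[1,|r|]$, and count factorizations $[1,|r|]=P\cup Q$ with $P$ a $p$-occurrence in $r$ whose $h_1$-th index has value $r$ in position $h_3$... i.e.\ whose $h_1$-th index equals $h_3$, and similarly for $Q$). With this definition, the displayed equation becomes exactly \eqref{eq:i-pattern products} evaluated at $w$, and since $w\in\S$ was arbitrary, the identity holds as permutation statistics.

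For part (i), note that if $I=P\cup Q$ with $|P|=|p|$ and $|Q|=|q|$, then $|r|=|I|\le|P|+|Q|=|p|+|q|$, with equality only if $P\cap Q=\varnothing$; but $h_3$ lies in both the image of $P$ and the image of $Q$, so $P\cap Q\neq\varnothing$, forcing $|r|<|p|+|q|$. Hence $d^{(p,h_1)(q,h_2)}_{(r,h_3)}=0$ whenever $|p|+|q|\le|r|$, and of course $d^{(p,h_1)(q,h_2)}_{(r,h_3)}\in\Z_{\ge0}$ by construction. (As a consequence, for fixed $(p,h_1),(q,h_2)$ only finitely many $(r,h_3)$ contribute, so the right-hand side of \eqref{eq:i-pattern products} is a finite linear combination of $i$-pattern-counts; combined with the identity, this shows $\Lambda\S'_i$ is closed under products, and it already contains $[\varnothing]$ and is a $\Lambda$-submodule, so it is a subalgebra.)

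The only real subtlety — and the step I would write most carefully — is the claim that the inner count is genuinely independent of $w$ and $I$, i.e.\ that it is legitimate to read it off from $(r,h_3)$ alone. This is where one must be slightly careful that "the $h_1$-th element of $P$ has value-in-$I$-order-type equal to position $h_3$" is a purely order-theoretic condition: given the abstract marked patterns, a factorization $I=P\cup Q$ is determined by which elements of $[1,|r|]$ (in their natural order) belong to $P$, which belong to $Q$, subject to $P$ being order-isomorphic to $p$, $Q$ order-isomorphic to $q$, and the $h_1$-st smallest element of $P$ coinciding with the $h_2$-nd smallest element of $Q$ and both equal to the $h_3$-rd element overall. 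Since an order-isomorphism $w|_I\cong r$ carries occurrences to occurrences and preserves the positional data, the count over $I\subseteq[1,n]$ with $w|_I=r$ of such factorizations equals the count over $[1,|r|]$ itself, independent of $w$ and $I$. Once this bijection is spelled out, the proposition follows. I expect this bookkeeping about positions versus values under order-isomorphism to be the main (mild) obstacle; the rest is a direct transcription of the unrefined argument recalled before Proposition~\ref{prop:free algebra}.
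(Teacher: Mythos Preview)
Your approach is correct and essentially identical to the paper's: both define $d_{(r,h_3)}^{(p,h_1)(q,h_2)}$ as the number of covering decompositions $[1,|r|]=A\cup B$ with $A,B$ indexing $h_3$-calibrated occurrences of $(p,h_1),(q,h_2)$ in $r$, and establish \eqref{eq:i-pattern products} via the natural bijection between pairs of $i$-calibrated occurrences in $w$ and triples $(K,A,B)$ (your $(I,P,Q)$). One slip to correct: in your displayed equation the inner sum over $I$ must be restricted to those $I$ whose $h_3$-th element equals $i$, since your $e$ as defined does not mention $i$; with that constraint the inner sum becomes exactly $\calib{r}{h_3}{i}(w)$ copies of the intrinsic count, and the rest goes through.
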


\begin{proof}
    Let $d_{(r,h_3)}^{(p,h_1)(q,h_2)}$ denote the number of ways to write $[1,|r|]$ as a (not necessarily disjoint) union $A\cup B$, where $A = \{a_1<\cdots<a_{|p|}\}$ and $B = \{b_1<\cdots<b_{|q|}\}$ index $h_3$-calibrated occurrences in $r$ of $(p,h_1)$ and $(q,h_2)$, respectively.
    It is clear that (i) is satisfied.

    We show (ii) holds by evaluating both sides of Equation~\eqref{eq:i-pattern products} on a permutation $w\in \S$.
    The left-hand side counts pairs $(I,J)$ where $I = \{i_1<\cdots<i_{|p|}\}$ and $J = \{j_1<\cdots<j_{|q|}\}$ index $i$-calibrated occurrences in $w$ of $(p,h_1)$ and $(q,h_2)$, respectively.
    The right-hand side counts triples $(K,A,B)$ where $K = \{k_1<\cdots<k_{|r|}\}$ indexes an $i$-calibrated occurrence in $w$ of some $(r,h_3)\in \S'$ and $[1,|r|] = A\cup B$ is a decomposition counted by $d_{(r,h_3)}^{(p,h_1)(q,h_2)}$.
    We will demonstrate that these sets are in bijection.
    
    To go from $(I,J)$ to $(K,A,B)$, set $K := I\cup J$ (which necessarily indexes an $i$-calibrated occurrence in $w$ of some $(r,h_3)\in \S'$) and set $A := \{a \mid k_a\in I\}$ and $B := \{b \mid k_b\in J\}$.
    Then $A$ indexes a $p$-occurrence in $r$, since $r(a_1)\cdots r(a_{|p|})$ has the same relative order as $w(k_{a_1})\cdots w(k_{a_{|p|}}) = w(i_1)\cdots w(i_{|p|})$.
    Furthermore, $k_{a_{h_1}} = i_{h_1} = i = k_{h_3}$ implies $a_{h_1} = h_3$, so this is an $h_3$-calibrated occurrence of $(p,h_1)$.
    A similar argument shows that $B$ indexes an $h_3$-calibrated occurrence of $(q,h_2)$ in $r$.

    Conversely, starting with $(K,A,B)$, we can take $I := \{k_a \mid a\in A\}$ and $J := \{k_b \mid b\in B\}$.
    The set $I$ indicates an occurrence of $p$ in $w$, since $w(i_1)\cdots w(i_{|p|}) = w(k_{a_1})\cdots w(k_{a_{|p|}})$ has the same relative order as $r(a_1)\cdots r(a_{|p|})$.
    We also know that $i_{h_1} = k_{a_{h_1}} = k_{h_3} = i$, so this is an $i$-calibrated occurrence of $(p,h_1)$.
    Likewise, the set $J$ indicates a $i$-calibrated occurrence of $(q,h_2)$ in $w$.
    These constructions are inverse to one another, yielding the desired bijection.
\end{proof}

We can now state our criterion for pattern-finiteness.

\begin{theorem}\label{thm:averaging criterion}
    Fix $s$ marked permutations $(p_1,h_1), \ldots, (p_s,h_s) \in \S'$, and a polynomial \mbox{$f(x_1, \ldots, x_s) \in \Lambda[x_1,\ldots, x_s]$.}
    The permutation statistic
    \begin{equation}
        \label{eq:averaging criterion}
        \sum_{i=1}^\infty f\big(\calib{p_1}{h_1}{i}, \ldots, \calib{p_s}{h_s}{i}\big)
    \end{equation}
    is pattern-finite.
\end{theorem}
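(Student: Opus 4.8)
The plan is to reduce the statement to the pattern-finiteness already established for individual pattern-count functions, by expanding the polynomial and controlling the sizes of the patterns that can appear. First I would write $f(x_1,\dots,x_s) = \sum_{\mathbf{a}} c_{\mathbf{a}} x_1^{a_1}\cdots x_s^{a_s}$ as a finite sum of monomials, so that by $\Lambda$-linearity it suffices to treat a single monomial $\prod_{t=1}^s \calib{p_t}{h_t}{i}^{a_t}$, summed over $i\ge 1$. Using Proposition~\ref{prop:i-pattern products} repeatedly, each such product of $i$-pattern-counts equals a finite linear combination $\sum_{(r,h)} e_{(r,h)} \cdot \calib{r}{h}{i}$, where the coefficients $e_{(r,h)}\in\Lambda$ are obtained by iterating the structure constants $d_*^{**}$ and, crucially, are \emph{independent of} $i$ (this is the content of part (ii) of the proposition). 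Moreover, part (i) of that proposition forces $e_{(r,h)} = 0$ unless $|r| \le \sum_t a_t\,|p_t| =: d$, so only finitely many marked permutations $(r,h)$ occur.

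Next I would sum over $i$. Interchanging the (finite) sum over $(r,h)$ with the sum over $i$, we get
\begin{equation*}
    \sum_{i=1}^\infty \prod_{t=1}^s \calib{p_t}{h_t}{i}^{a_t} = \sum_{(r,h)} e_{(r,h)} \sum_{i=1}^\infty \calib{r}{h}{i},
\end{equation*}
and the inner sum $\sum_{i\ge1}\calib{r}{h}{i}$ telescopes to the ordinary pattern-count $[r]$ by the identity $[p] = \sum_{i\ge1}\calib{p}{h}{i}$ recorded just after the definition of $i$-calibrated occurrences. Hence the monomial sum equals $\sum_{(r,h)} e_{(r,h)} \cdot [r]$, a finite linear combination of pattern-count functions, all with $|r|\le d$. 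Summing the contributions of the finitely many monomials of $f$ gives that the statistic in Equation~\eqref{eq:averaging criterion} is a finite linear combination of pattern-counts, i.e.\ pattern-finite, with all patterns of size at most $\max_{\mathbf{a}} \sum_t a_t |p_t|$ over monomials $\mathbf{a}$ appearing in $f$.

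The one point requiring genuine care — the main obstacle — is the legitimacy of swapping $\sum_i$ with the expansions: since the final object is an infinite sum of statistics, I should argue at the level of evaluation on a fixed $w\in\S$, where everything is a finite sum (only $i\le|w|$ contribute a nonzero $i$-calibrated count, and only $|r|\le|w|$ contribute to $[r](w)$), so all rearrangements are valid pointwise; uniqueness of pattern expansions (Proposition~\ref{prop:every statistic has a pattern expansion}) then promotes the pointwise identity to an identity of pattern expansions. A secondary bookkeeping point is checking that iterating Proposition~\ref{prop:i-pattern products} to handle a product of more than two factors indeed yields $i$-independent structure constants and preserves the size bound; this is a routine induction on the number of factors, using associativity of pointwise multiplication together with part~(i) to maintain the degree bound $|r|\le\sum_t a_t|p_t|$ at each step.
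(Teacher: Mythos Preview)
Your proposal is correct and follows essentially the same approach as the paper: expand the polynomial, use Proposition~\ref{prop:i-pattern products} to write each $f\big(\calib{p_1}{h_1}{i},\dots,\calib{p_s}{h_s}{i}\big)$ as a finite $i$-independent linear combination $\sum_{(r,h)} C_{(r,h)}\calib{r}{h}{i}$, then sum over $i$ and use $\sum_i \calib{r}{h}{i} = [r]$. Your additional remarks on the monomial reduction, the inductive use of Proposition~\ref{prop:i-pattern products}, the size bound, and the pointwise justification for swapping sums are all accurate elaborations of steps the paper leaves implicit.
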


\begin{proof}
    By Proposition~\ref{prop:i-pattern products}, we can expand each term in Equation~\eqref{eq:averaging criterion} as a linear combination \[
        f\big(\calib{p_1}{h_1}{i},\dots,\calib{p_s}{h_s}{i}\big)
        =\sum_{(r,h)\in \S'} C_{(r,h)} \cdot \calib{r}{h}{i}
    \] where the coefficients $C_{(r,h)}\in \Lambda$ are independent of $i$, and only finitely many are nonzero.
    Averaging, then, gives \begin{align*}
        \sum_{i=1}^\infty f\big(
            \calib{p_1}{h_1}{i},\dots,\calib{p_s}{h_s}{i}
        \big)
        &= \sum_{i=1}^\infty \sum_{(r,h)\in \S^*} C_{(r,h)} \cdot \calib{r}{h}{i}\\
        &= \sum_{(r,h)\in \S'} C_{(r,h)} \left(\sum_{i=1}^\infty \calib{r}{h}{i}\right)\\
        &= \sum_{(r,h)\in \S'} C_{(r,h)} \cdot [r]\\
        &= \sum_{r\in \S} \left(\sum_{h=1}^{|r|} C_{(r,h)}\right) \cdot [r],
    \end{align*}
    so the statistic has a finite pattern expansion.
\end{proof}

\subsection{A case study: moment statistics}
\label{sec: higher moments}

For $i\geq 1$, write $\Delta_i$ for the $i$th difference statistic given by \[
    \Delta_i(w) := w(i) - i,
\] where by convention $w(i) = i$ whenever $i>|w|$.
Notice that the permutation variance can be expressed as $V = \sum_{i=1}^\infty \Delta_i^2$.
More generally, given $m\geq 1$, we can consider the \emph{$m$th permutation moment} \[
    \mu_m(w)
    := \sum_{i=1}^\infty \Delta_i(w)^m
    = \sum_{i=1}^\infty (w(i) - i)^m.
\]

Recall that $V$ is pattern-finite \cite{BermanTenner}.
Indeed, using Theorem\ref{thm:averaging criterion}, we will see that the same holds for higher permutation moments as well.
The key observation is the following:

\begin{lemma}
\label{lem:Delta_i expansion}
For $i\geq 1$, we have $\Delta_i = [\underline{2}1,i] - [2\underline{1},i]$.
\end{lemma}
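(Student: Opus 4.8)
The plan is to evaluate both sides of the claimed identity on an arbitrary permutation $w \in S_n$ and show they agree. Fix $i \geq 1$. If $i > n$ then $\Delta_i(w) = 0$ by the stated convention, and also there are no $i$-calibrated occurrences of either $\underline{2}1$ or $2\underline{1}$ in $w$ since position $i$ does not exist, so both sides vanish; thus we may assume $1 \leq i \leq n$.

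For $1 \leq i \leq n$, I would unwind the definitions of the two $i$-pattern-counts. An $i$-calibrated occurrence of $\underline{2}1 = (21,1)$ in $w$ is a pair of positions $i < j$ (the marked position being the first one, which must equal $i$) with $w(i) > w(j)$; hence $[\underline{2}1,i](w) = \#\{j : j > i,\ w(i) > w(j)\}$, which is exactly the code entry $c_i(w)$ — the number of values appearing to the right of position $i$ that are smaller than $w(i)$. Similarly, an $i$-calibrated occurrence of $2\underline{1} = (21,2)$ in $w$ is a pair of positions $j < i$ with $w(j) > w(i)$, so $[2\underline{1},i](w) = \#\{j : j < i,\ w(j) > w(i)\}$ — the number of values appearing to the left of position $i$ that are larger than $w(i)$.

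The remaining step is the elementary counting identity
\[
    \#\{j > i : w(j) < w(i)\} - \#\{j < i : w(j) > w(i)\} = w(i) - i.
\]
To see this, note that among the values $1, 2, \ldots, w(i)-1$ (there are $w(i)-1$ of them, all distinct from $w(i)$), each appears either to the left of position $i$ or to the right; let $L_{<}$ and $R_{<}$ count those to the left and right respectively, so $L_{<} + R_{<} = w(i) - 1$. Likewise among the $i-1$ positions $1, \ldots, i-1$, each value $w(j)$ is either less than $w(i)$ or greater than $w(i)$; writing $L_{<}$ and $L_{>}$ for these counts gives $L_{<} + L_{>} = i - 1$. Then
\[
    \#\{j > i : w(j) < w(i)\} - \#\{j < i : w(j) > w(i)\} = R_{<} - L_{>} = (w(i) - 1 - L_{<}) - (i - 1 - L_{<}) = w(i) - i,
\]
which is $\Delta_i(w)$. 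Combining, $\Delta_i(w) = [\underline{2}1,i](w) - [2\underline{1},i](w)$ for all $i$ and all $w$, establishing the identity of statistics.

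I do not anticipate a serious obstacle here — the entire content is a bookkeeping argument about where the values smaller than $w(i)$ and the positions left of $i$ sit relative to $i$. The only point requiring a little care is the boundary case $i > |w|$, where one must check that the convention $w(i) = i$ is consistent with both $i$-pattern-counts being zero; this is immediate once one observes that an $i$-calibrated occurrence requires position $i$ to be among the chosen indices, hence requires $i \leq |w|$.
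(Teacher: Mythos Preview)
Your proof is correct and follows essentially the same approach as the paper: both arguments reduce to the cancellation identity $\#\{j>i:w(j)<w(i)\}-\#\{j<i:w(j)>w(i)\}=w(i)-i$, obtained by subtracting the common count $\#\{j<i:w(j)<w(i)\}$ from both $w(i)-1$ and $i-1$. The paper's version is slightly terser (it writes the chain $w(i)-i=\#\{j:w(j)<w(i)\}-\#\{j:j<i\}$ and cancels directly), but the content is identical; your explicit handling of the boundary case $i>|w|$ is a minor addition the paper omits.
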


\begin{proof}
Observe that \begin{align*}
    \Delta_i(w) &= w(i) - i\\
    &= \#\{ j \, | \, w(j)< w(i) \} - \# \{ j \, | \, j<i \}\\
    &= \# \{ j \, | \, j>i \text{ and } w(j)<w(i)\} - \# \{ j \, | \, j<i \text{ and } w(j)>w(i)\}\\
    &= [\underline{2}1,i](w) - [2\underline{1},i](w).\qedhere
\end{align*}
\end{proof}

To state our more precise results on the pattern expansion of $\mu_m$, we need the following notation:
for $p\in S_a$ and $q\in S_b$, let $p\ominus q\in S_{a+b}$ denote the skew sum of $p$ and $q$ given by
\[
    (p\ominus q)(i) = \begin{cases}
        b+p(i) & \text{if } 1\leq i\leq a, \text{ and} \\
        q(i-a) & \text{if } a+1 \leq i\leq a+b.
    \end{cases}
\]

\begin{theorem}\label{thm:moments are pattern-finite}
For $m\geq 1$, the $m$th permutation moment $\mu_m$ is pattern-finite.
Explicitly, \[
    \mu_m
    = \sum_{\substack{p,q\in \S \\ |p|+|q|\leq m}} \kappa(m,|p|,|q|) \cdot [q\ominus 1\ominus p]
\]
where \[
    \kappa(m,a,b) 
    := \sum_{k=b}^{m-a} (-1)^k \binom{m}{k} \,a!\, S(m-k,a) \,b!\, S(k,b).
\]
Here $S(n,k)$ denotes a Stirling number of the second kind, which counts the number of set partitions of $[1,n]$ into $k$ nonempty blocks.
Equivalently, $k! S(n,k)$ counts the number of surjections from $[1,n]$ to $[1,k]$.
\end{theorem}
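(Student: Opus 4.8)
The plan is to start from Lemma~\ref{lem:Delta_i expansion}, which gives $\Delta_i = \calib{21}{1}{i} - \calib{21}{2}{i}$ (writing the two marked permutations $\underline{2}1$ and $2\underline{1}$). Raising this to the $m$th power and applying the binomial theorem, we get
\[
    \Delta_i^m = \sum_{k=0}^m (-1)^k \binom{m}{k} \big(\calib{21}{1}{i}\big)^{m-k}\big(\calib{21}{2}{i}\big)^{k}.
\]
The next step is to identify the powers $\big(\calib{21}{1}{i}\big)^{a}$ and $\big(\calib{21}{2}{i}\big)^{b}$ as $i$-pattern-counts. Here the key combinatorial fact is that $\calib{21}{1}{i}(w)$ counts indices $j>i$ with $w(j)<w(i)$, and an $a$-fold product of such counts is the number of weakly... — more precisely, $\big(\calib{21}{1}{i}(w)\big)^{a}$ counts $a$-tuples $(j_1,\dots,j_a)$ of (not necessarily distinct) indices, each greater than $i$ with smaller value than $w(i)$. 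Grouping such tuples by the induced set partition of $\{1,\dots,a\}$ (two coordinates equal iff same block) and by the relative order of the distinct indices, one sees that this product expands as a sum of $i$-pattern-counts $\calib{1\ominus p}{1}{i}$ where $p$ ranges over permutations that ``lie below and to the right'' of position $i$; the multiplicity of $\calib{1\ominus p}{1}{i}$ for $p\in S_c$ is $c!\,S(a,c)$, the number of surjections from $[1,a]$ onto $[1,c]$, since we must choose which block maps to which of the $c$ distinct index-values in order. Symmetrically, $\big(\calib{21}{2}{i}\big)^{b}$ expands with multiplicity $d!\,S(b,d)$ onto $\calib{q\ominus 1}{|q|+1}{i}$ for $q\in S_d$ lying ``above and to the left'' of position $i$.

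Then I would multiply these two expansions together using Proposition~\ref{prop:i-pattern products}: a product $\calib{1\ominus p}{1}{i}\cdot \calib{q\ominus 1}{|q|+1}{i}$ counts pairs of occurrences sharing the calibrated position $i$, one consisting of $i$ together with $|p|$ positions below-and-right, the other of $i$ together with $|q|$ positions above-and-left. Because the first occurrence uses only positions to the right of $i$ with values below $w(i)$, and the second uses only positions to the left of $i$ with values above $w(i)$, the two occurrences intersect \emph{only} in the shared position $i$ — the supports are otherwise forced to be disjoint. Hence the product is a single $i$-pattern-count $\calib{q\ominus 1\ominus p}{|q|+1}{i}$ with coefficient $1$. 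Summing over $i$ via $\sum_i \calib{r}{h}{i} = [r]$ (as in the proof of Theorem~\ref{thm:averaging criterion}) collapses everything to the ordinary pattern-count $[q\ominus 1\ominus p]$, and collecting the binomial and surjection factors over all ways to realize a given $(p,q)$ with $|p|=a$, $|q|=b$, $a+b\le m$ yields exactly
\[
    \kappa(m,a,b) = \sum_{k=b}^{m-a} (-1)^k \binom{m}{k}\, a!\, S(m-k,a)\, b!\, S(k,b),
\]
the range of $k$ being forced since $\big(\calib{21}{2}{i}\big)^{k}$ can only produce $q$ with $|q|\le k$ (so $k\ge b$) and $\big(\calib{21}{1}{i}\big)^{m-k}$ only $p$ with $|p|\le m-k$ (so $k\le m-a$). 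Pattern-finiteness is then immediate, or alternatively it follows directly from Theorem~\ref{thm:averaging criterion} applied to $f(x_1,x_2) = (x_1-x_2)^m$.

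I expect the main obstacle to be the careful bookkeeping in the second step: precisely verifying that the expansion of the $a$-fold power $\big(\calib{21}{1}{i}\big)^{a}$ in terms of $i$-pattern-counts has coefficients $c!\,S(a,c)$, which requires setting up the right bijection between $a$-tuples of repeated indices and (set partition, strictly increasing index pattern) pairs, and confirming that the resulting marked pattern is exactly $1\ominus p$ with the mark at position $1$. A secondary subtlety — though I expect it to be genuinely easy once stated — is the disjointness-of-supports claim that makes the product in the third step a \emph{single} pattern-count with coefficient $1$ rather than a more complicated sum of the $d_*^{**}$ from Proposition~\ref{prop:i-pattern products}; this needs the observation that ``below-and-right of $i$'' and ``above-and-left of $i$'' are disjoint regions of the permutation matrix meeting only at the column through $i$. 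Once these two points are nailed down, assembling $\kappa(m,a,b)$ is just collecting coefficients.
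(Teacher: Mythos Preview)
Your proposal is correct and follows essentially the same approach as the paper: start from $\Delta_i = [\underline{2}1,i] - [2\underline{1},i]$, binomially expand $\Delta_i^m$, interpret the product $[\underline{2}1,i]^{m-k}[2\underline{1},i]^k$ combinatorially as tuples of indices with repetition, group by the underlying pattern $q\ominus 1\ominus p$ with surjection multiplicities $a!\,S(m-k,a)\,b!\,S(k,b)$, and sum over $i$. The only cosmetic difference is that you expand each power separately and then observe the product $\calib{1\ominus p}{1}{i}\cdot\calib{q\ominus 1}{|q|+1}{i}=\calib{q\ominus 1\ominus p}{|q|+1}{i}$ via disjointness of supports, whereas the paper treats the full product $[\underline{2}1,i]^{m-k}[2\underline{1},i]^k$ at once; the combinatorics is identical.
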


\begin{proof}
We can express $\mu_m = \sum_{i\geq1} \left([\underline{2}1,i] - [2\underline{1},i]\right)^m$ using Lemma~\ref{lem:Delta_i expansion},
so $\mu_m$ is automatically pattern-finite by Theorem~\ref{thm:averaging criterion}.
To compute the pattern expansion of $\mu_m$, we expand: \[
    \Delta_i^m
    = \left([\underline{2}1,i] - [2\underline{1},i]\right)^m
    = \sum_{k=0}^m (-1)^k \binom{m}{k} \cdot [\underline{2}1,i]^{m-k} \cdot [2\underline{1},i]^k.
\]
The statistic $[\underline{2}1,i]^{m-k} [2\underline{1},i]^k$ evaluated at $w$ counts the number of ways to choose indices $i'_1,\dots,i'_k< i < i''_1,\dots,i''_{m-k}$ such that $w(i'_j) > w(i)$ for all $1\leq j\leq k$ and $w(i) > w(i''_j)$ for all $1\leq j\leq m-k$.
The subword of $w$ on these indices will be an occurrence of some permutation $q\ominus 1\ominus p$ where $|p|\leq m-k$ and $|q|\leq k$, and every such occurrence arises this way in $|p|!\,S(m-k,|p|)\,|q|!\,S(k,|q|)$ ways.
Hence \begin{align*}
    \Delta_i^m
    &= \sum_{k=0}^m (-1)^k \binom{m}{k} \left(
    \sum_{\substack{p,q\in \S \\ |p|\leq m-k \\ |q|\leq k}}
        |p|! \,|q|! \,S(m-k,|p|)\, S(k,|q|) \cdot \calib{q\ominus 1\ominus p}{|q|+1}{i}
    \right) \\
    &= \sum_{\substack{p,q\in \S \\ |p|+|q|\leq m}}
    \underbrace{\left(
         \sum_{k=|q|}^{m-|p|} (-1)^k \binom{m}{k} |p|! \,S(m-k,|p|)\, |q|!\, S(k,|q|)
    \right)}_{\kappa(m,|p|,|q|)} \cdot \calib{q\ominus 1\ominus p}{|q|+1}{i}.
\end{align*}
The result follows from summing over all $i\geq1$.
\end{proof}

\begin{example}\label{ex: moments}
Recall that 
$$
    \mu_1 = 0 \hspace{.25in} \text{and} \hspace{.25in}
    \mu_2 = 2\big([21] + [231] + [312] + [321]\big).
$$
We will now compute the pattern expansion of $\mu_3$.
Note that the coefficients $\kappa(m,a,b)$ satisfy the following properties:
\begin{itemize}
\item $\kappa(m,a,b) = (-1)^m\kappa(m,b,a)$. In particular, $\kappa(m,a,a) = 0$ whenever $m$ is odd.
\item $\kappa(m,a,0) = a! \, S(m,a) = (-1)^m\kappa(m,0,a)$.
\item $\kappa(a+b,a,b) = (-1)^b (a+b)!$.
\end{itemize}
The coefficients $\kappa(3,a,b)$ are thus given by \[
    \begin{tabular}{|c||c|c|c|c|}
        \hline
        \backslashbox{$a$}{$b$} & $\phantom{-}0\phantom{-}$ \ & $1$ & $2$ & $3$ \\ \hline\hline
        $0$ & $0$ & $-1\phantom{-}$ & $-6\phantom{-}$ & $-6\phantom{-}$ \\ \hline
        $1$ & $1$ & $0$  & $6$  & $0$  \\ \hline
        $2$ & $6$ & $-6\phantom{-}$ & $0$  & $0$  \\ \hline
        $3$ & $6$ & $0$  & $0$  & $0$  \\ \hline
\end{tabular}
\]
so we get the pattern expansion
\begin{align*}
    \mu_3
    &= \big([21] - [21]\big) + 6\Big(\sum_{p\in S_2} [1\ominus p] - \sum_{q\in S_2} [q\ominus 1]\Big) \\
    &\qquad + 6\Big(\sum_{p\in S_3} [1\ominus p] - \sum_{\substack{p\in S_2}} [21\ominus p] + \sum_{\substack{q\in S_2}} [q\ominus 21] - \sum_{q\in S_3} [q\ominus 1] \Big) \\
    &= 6\big([312] - [231]
    + [4213]+[4123]+[4132]-[2431]-[2341]-[3241]\big).
\end{align*}
\end{example}

Despite the previous examples, it is not always the case that $\mu_m$ is divisible by $m!$.
For example, we have $\mu_4(231) = (2-1)^4 + (3-2)^4 + (1-3)^4 = 1 + 1 + 16 = 18$.
However, it turns out to be the case that
\begin{align*}
    \mu_4
    &= 24\big([312] + [321] + [2341] + [2431] + [3241] + [3421] \\
    &\quad + 2[4123] + 2[4132] + 2[4213] + [4312] + 2[4321] + 3[4231] \\
    &\quad + [23451] + [23541] + [24351] + [24531] + [25341] + [25431] + [32451] + [32541] \\ 
    &\quad + [34251] + [35241] + [42351] + [42531] + 
    [43251] + [45231] + [45312] + [51234] \\
    &\quad + [51243] + [51324] + [51342] + [51423] + [51432] + [52134] + [52143] + [52314] \\
    &\quad + [52413] + [53124] + [53142] + [53214] + [53412] + [53421] + [54231] + [54312] \\
    &\quad + [54321] + 2[45321] + 2[52341] + 2[52431] + 2[53241]\big) + 2\mu_3 + \mu_2.
\end{align*}
One explanation for this divisibility is that there is a factorization (recalling that $\mu_1 = 0$) \begin{align*}
    \mu_4 - 2\mu_3 - \mu_2 + 2\mu_1
    &= \sum_{i\geq 1}
    (\Delta_i^4 - 2\Delta_i^3 - \Delta_i^2 + 2\Delta_i) \\
    &= \sum_{i\geq 1}(\Delta_i + 1)(\Delta_i)(\Delta_i - 1)(\Delta_i - 2).
\end{align*}
This suggests that the following variant of the permutation moment statistic is more natural.

\begin{definition}
    Define the \emph{$m$th binomial permutation moment} to be the statistic 
    \[
    \overline{\mu}_m(w)
    := \sum_{i\geq1} \binom{\Delta_i(w) + \lceil m/2\rceil - 1}{m}
    = \sum_{i\geq1} \binom{w(i) - i + \lceil m/2\rceil - 1}{m}.
    \]
\end{definition}

As suggested by the discussion above, we find that the coefficients in pattern expansion of $\overline{\mu}_m$ are given by the following simpler expressions than those for $\mu_m$.

\begin{proposition}
For $m\geq 1$, the $m$th binomial permutation moment $\overline{\mu}_m$ is pattern-finite, expanding as
\[
    \overline{\mu}_m = \sum_{\substack{p,q\in \S \\ |p|+|q|\leq m}} \overline{\kappa}(m,|p|,|q|) \cdot [q\ominus 1\ominus p],
\]
where \[
    \overline{\kappa}(m,a,b) 
    := \sum_{k= b}^{m-a} (-1)^k \binom{\lceil m/2\rceil-1}{m-a-k} \binom{k-1}{k-b}.
\]
\end{proposition}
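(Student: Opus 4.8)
The plan is to follow the argument that proves Theorem~\ref{thm:moments are pattern-finite}, with one simplification: working with binomial coefficients of the calibrated counts rather than their powers makes the Stirling numbers disappear and produces exactly the coefficients $\overline{\kappa}$. Throughout I would abbreviate $X_i := [\underline{2}1,i]$ and $Y_i := [2\underline{1},i]$, so that $\Delta_i = X_i - Y_i$ by Lemma~\ref{lem:Delta_i expansion} and $\overline{\mu}_m = \sum_{i\geq 1}\binom{\Delta_i + \lceil m/2\rceil-1}{m}$.

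First I would prove the product identity
\[
    \binom{X_i}{a}\binom{Y_i}{b} = \sum_{\substack{p\in S_a\\ q\in S_b}} \calib{q\ominus 1\ominus p}{b+1}{i}
\]
for all $a,b\geq 0$ and $i\geq 1$, by evaluating both sides on a permutation $w$. The left-hand side counts pairs $(S,T)$ in which $T$ is an $a$-element subset of $\{j>i : w(j)<w(i)\}$ and $S$ is a $b$-element subset of $\{j<i : w(j)>w(i)\}$; the map $(S,T)\mapsto K := S\cup\{i\}\cup T$ is a bijection onto the $i$-calibrated occurrences in $w$ of marked permutations of the form $(q\ominus 1\ominus p,|q|+1)$ with $p\in S_a$ and $q\in S_b$, since in $w|_K$ the entries indexed by $S$ all exceed $w(i)$, which in turn exceeds all entries indexed by $T$, and $i$ occupies position $b+1 = |q|+1$ of $K$. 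In particular each $\binom{X_i}{a}\binom{Y_i}{b}$ lies in $\Lambda\S'_i$, so $\binom{\Delta_i+\lceil m/2\rceil-1}{m}$, being a polynomial in $X_i$ and $Y_i$, is $i$-pattern-finite, and pattern-finiteness of $\overline{\mu}_m$ follows (this also recovers it via Theorem~\ref{thm:averaging criterion}, taking $\Lambda=\Q$).

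Next I would establish the polynomial identity, valid for integers $c\geq 1$ and $m\geq 0$,
\[
    \binom{x-y+c-1}{m} = \sum_{\substack{a,b\geq 0\\ a+b\leq m}}\Bigl(\sum_{k}(-1)^k\binom{c-1}{m-a-k}\binom{k-1}{k-b}\Bigr)\binom{x}{a}\binom{y}{b},
\]
by three applications of Vandermonde's identity: expand $\binom{(x+c-1)+(-y)}{m}=\sum_j\binom{x+c-1}{m-j}\binom{-y}{j}$, then $\binom{x+c-1}{m-j}=\sum_a\binom{x}{a}\binom{c-1}{m-j-a}$, and finally $\binom{-y}{j}=(-1)^j\binom{y+j-1}{j}=(-1)^j\sum_b\binom{y}{b}\binom{j-1}{j-b}$. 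Reading off the coefficient of $\binom{x}{a}\binom{y}{b}$ (with $k=j$) and checking that the surviving terms are exactly those with $b\leq k\leq m-a$, using $\binom{n}{\ell}=0$ for $\ell<0$, identifies the inner sum with $\overline{\kappa}(m,a,b)$ once $c=\lceil m/2\rceil$. Substituting $x=X_i$, $y=Y_i$, $c=\lceil m/2\rceil$ and applying the product identity term by term gives, for each $i\geq 1$,
\[
    \binom{\Delta_i+\lceil m/2\rceil-1}{m} = \sum_{\substack{p,q\in\S\\ |p|+|q|\leq m}}\overline{\kappa}(m,|p|,|q|)\cdot\calib{q\ominus 1\ominus p}{|q|+1}{i}.
\]
Summing over $i\geq 1$ and using $\sum_{i\geq 1}\calib{r}{h}{i}=[r]$ then yields the claimed finite pattern expansion, which is \emph{the} pattern expansion of $\overline{\mu}_m$ by the uniqueness in Proposition~\ref{prop:every statistic has a pattern expansion}.

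The main obstacle is the bookkeeping in the Vandermonde step: keeping the three nested summation indices straight and verifying the boundary behavior (the $j=0$ term, and binomial coefficients with negative arguments such as $\binom{k-1}{k-b}$ at $k=b=0$) so that the collected coefficient is exactly $\overline{\kappa}(m,a,b)$ with no stray terms. By contrast, the product identity in the second paragraph is a routine refinement of the counting argument in the proof of Theorem~\ref{thm:moments are pattern-finite}, and the passage from the per-$i$ identity to the pattern expansion is purely formal. As a check, and for later use, it would be worth recording the resulting special values of $\overline{\kappa}$ (for instance its symmetry in $a$ and $b$ up to sign, and the boundary cases $a=0$ or $b=0$), in analogy with the identities for $\kappa$ noted in Example~\ref{ex: moments}.
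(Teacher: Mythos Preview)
Your proposal is correct and follows essentially the same approach as the paper: both arguments establish the product identity $\binom{X_i}{a}\binom{Y_i}{b} = \sum_{p\in S_a,\,q\in S_b}\calib{q\ominus 1\ominus p}{b+1}{i}$ and expand $\binom{x-y+r}{m}$ in the basis $\binom{x}{a}\binom{y}{b}$ via repeated Vandermonde, then sum over $i$. The only cosmetic difference is the order of the Vandermonde splits---the paper first peels off $r=\lceil m/2\rceil-1$ and then separates $x$ from $-y$, whereas you first peel off $-y$ and then separate $x$ from $c-1$; after re-indexing both yield exactly $\overline{\kappa}(m,a,b)$, and the boundary term $\binom{-1}{0}=1$ at $k=b=0$ that you flag is handled identically in both.
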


\begin{proof}
The coefficients $\overline{\kappa}(m,a,b)$ are defined so that the identity \begin{equation}
\label{eq:kappa bar identity}
    \binom{\Delta_i+\lceil m/2\rceil-1}{m}
    = \sum_{\substack{a,b\geq 0}} \overline{\kappa}(m,a,b) 
    \binom{[\underline{2}1,i]}{a}
    \binom{[2\underline{1},i]}{b}
\end{equation}
holds.
To see this, one needs the following well-known identity of binomial coefficients:
\begin{equation}
\label{eq:a+b choose c identity}
    \binom{\alpha+\beta}{\gamma} = \sum_{a\geq 0} \binom{\alpha}{a}\binom{\beta}{\gamma-a}.
\end{equation}
A first computation shows that for $x,y,m\geq 0$, there is an identity
\begin{align*}
    \binom{x-y}m
    &= \sum_{a\geq 0} \binom{x}{a}\binom{-y}{m-a} \\
    &=\sum_{a\geq 0} (-1)^{m-a}
    \binom{x}{a}\binom{y+m-a-1}{m-a} \\
    &= \sum_{a,b\geq 0}
    (-1)^{m-a} \binom{m-a-1}{m-a-b}
    \binom{x}{a}\binom{y}{b}
\end{align*}
where the first and last equalities use Equation~\ref{eq:a+b choose c identity}
and the second equality uses the identity $(-1)^\beta\binom{-\alpha}{\beta} = \binom{\alpha+\beta-1}{\beta}$.
More generally, for $x,y,m,r\geq 0$ we have that \begin{align*}
    \binom{x-y+r}{m}
    &= \sum_{k\geq 0} \binom{r}{m-k} \binom{x-y}{k} \\
    &= \sum_{k\geq 0} \binom{r}{m-k}
    \left(
        \sum_{a,b\geq 0}
        (-1)^{k-a} \binom{k-a-1}{k-a-b}
        \binom{x}{a}\binom{y}{b}
    \right) \\
    &= \sum_{a,b\geq 0}
    \left(
        \sum_{k=a+b}^m (-1)^{k-a} \binom{r}{m-k} \binom{k-a-1}{k-a-b}
    \right)
    \binom{x}{a}\binom{y}{b}\\
    &= \sum_{a,b\geq 0}
    \left(
        \sum_{\ell=b}^{m-a} (-1)^\ell \binom{r}{m-a-\ell} \binom{\ell-1}{\ell-b}
    \right)
    \binom{x}{a}\binom{y}{b},
\end{align*}
where the first line is a final application of Equation~\ref{eq:a+b choose c identity},
the second line is the identity from our previous computation, and the last equality is a re-indexing of the inner sum with $\ell = k-a$.
Equation~\ref{eq:kappa bar identity} now follows by taking $x = [\underline{2}1,i]$, $y = [2\underline{1},i]$, and $r = \lceil m/2 \rceil - 1$, recalling that $\Delta_i = [\underline{2}1,i] - [2\underline{1},i] = x - y$.

Notice that the statistic $\binom{[\underline{2}1,i]}{a}\binom{[2\underline{1},i]}{b}$
evaluated at $w\in \S$ counts the ways to pick indices $i'_1 < \dots < i'_b < i < i''_1 < \dots < i''_a$ such that $w(i'_j) > w(i)$ for all $1\leq j\leq b$ and $w(i) > w(i''_j)$ for all $1\leq j\leq a$.
This is the same data as an occurrence of a pattern $q\ominus 1\ominus p$, where $p\in S_a$ and $q\in S_b$, hence \[
    \binom{[\underline{2}1,i]}{a} \binom{[2\underline{1},i]}{b}
    = \sum_{\substack{p\in S_a \\ q\in S_b}}
    [(q\ominus 1\ominus p,b+1),i].
\]
Combining this with Equation~\ref{eq:kappa bar identity}, we conclude that \[
    \binom{\Delta_i+\lceil m/2\rceil-1}{m}
    = \sum_{p,q\in \S} \overline{\kappa}(m,|p|,|q|) \cdot [(q\ominus 1\ominus p,|q|+1),i].
    \qedhere
\]
\end{proof}

Our discovery of the binomial permutation moment statistics was aided by the following simple expression for their expected values, which we now establish directly.

\begin{proposition}
\label{prop:binom moment expected value}
The expected value of $\overline\mu_{2m}$ on permutations of size $n$ is given by the formula
\[
    \mathbb{E}(\overline\mu_{2m})
    = \frac{1}{m+1} \binom{n+m}{2m+1}.
\]
\end{proposition}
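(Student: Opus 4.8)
The plan is to compute the average $\mathbb{E}(\overline\mu_{2m})=\frac{1}{n!}\sum_{w\in S_n}\overline\mu_{2m}(w)$ directly by linearity of expectation, reducing it to a sum of binomial coefficients that collapses under repeated use of the hockey-stick identity. First I would observe that, since $\lceil 2m/2\rceil-1=m-1$ and $\binom{m-1}{2m}=0$, the terms indexed by $i>n$ vanish, so $\overline\mu_{2m}(w)=\sum_{i=1}^n\binom{w(i)-i+m-1}{2m}$. For a uniformly random $w\in S_n$ each value $w(i)$ is uniformly distributed on $[1,n]$, so linearity gives
\[
    \mathbb{E}(\overline\mu_{2m}) = \sum_{i=1}^n\frac{1}{n}\sum_{j=1}^n\binom{j-i+m-1}{2m} = \frac{1}{n}\sum_{i=1}^n\sum_{j=1}^n\binom{j-i+m-1}{2m}.
\]

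Next I would evaluate the inner sum over $j$ by the hockey-stick identity $\sum_{t=a}^{b}\binom{t}{k}=\binom{b+1}{k+1}-\binom{a}{k+1}$ (valid for all integers $a\le b$ by telescoping Pascal's rule), obtaining $\sum_{j=1}^n\binom{j-i+m-1}{2m}=\binom{n+m-i}{2m+1}-\binom{m-i}{2m+1}$. Summing over $i=1,\dots,n$, a second application of hockey-stick to each term (plus cancellation of the two boundary terms $\binom{m}{2m+2}=0$) yields $\binom{n+m}{2m+2}-\binom{m-n}{2m+2}$. I would rewrite $\binom{m-n}{2m+2}$ via the upper-negation identity $\binom{-a}{k}=(-1)^k\binom{a+k-1}{k}$ (a polynomial identity, hence valid for any sign of the argument) as $-\binom{n+m+1}{2m+2}$, so that the double sum equals $\binom{n+m}{2m+2}+\binom{n+m+1}{2m+2}$. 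Applying $\binom{N}{k+1}=\frac{N-k}{k+1}\binom{N}{k}$ and $\binom{N+1}{k+1}=\frac{N+1}{k+1}\binom{N}{k}$ with $N=n+m$ and $k=2m+1$ then gives
\[
    \binom{n+m}{2m+2}+\binom{n+m+1}{2m+2} = \frac{(n-m-1)+(n+m+1)}{2m+2}\binom{n+m}{2m+1} = \frac{n}{m+1}\binom{n+m}{2m+1},
\]
and dividing by $n$ produces the claimed formula; when $n\le m$ both sides vanish consistently, with no separate case needed.

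The argument is elementary throughout, and the only place where real care is needed — and where a hasty computation is most likely to go astray — is the handling of $\binom{m-n}{2m+2}$ with its possibly negative upper index and getting the precise endpoints right in the two hockey-stick summations. The rest is routine telescoping and simplification of binomial coefficients, so I do not anticipate any conceptual obstacle.
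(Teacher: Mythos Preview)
Your approach is essentially the same as the paper's: both start from $\mathbb{E}(\overline\mu_{2m})=\frac{1}{n}\sum_{i,j}\binom{j-i+m-1}{2m}$, collapse the double sum to $\frac{1}{n}\bigl(\binom{n+m}{2m+2}+\binom{n+m+1}{2m+2}\bigr)$, and then simplify identically. The only difference is cosmetic---the paper groups terms by $k=j-i$ and applies the identity $\sum_{\ell=0}^{\beta-1}(\beta-\ell)\binom{\alpha+\ell}{\alpha}=\binom{\alpha+\beta+1}{\alpha+2}$, whereas you iterate the hockey-stick identity.

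However, your write-up contains two sign slips that happen to cancel. First, summing $\sum_{i=1}^n\bigl[\binom{n+m-i}{2m+1}-\binom{m-i}{2m+1}\bigr]$ gives $\binom{n+m}{2m+2}+\binom{m-n}{2m+2}$, not minus: the second hockey-stick yields $\sum_{i=1}^n\binom{m-i}{2m+1}=\binom{m}{2m+2}-\binom{m-n}{2m+2}=-\binom{m-n}{2m+2}$, and you are \emph{subtracting} this. Second, upper negation gives $\binom{m-n}{2m+2}=(-1)^{2m+2}\binom{n+m+1}{2m+2}=+\binom{n+m+1}{2m+2}$, not $-\binom{n+m+1}{2m+2}$, since the exponent $2m+2$ is even. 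With both signs corrected the computation goes through unchanged to the correct final expression. You flagged exactly this spot as the danger zone, so just fix these two signs and the proof is fine.
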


\begin{proof}
We will make use of the following identity of binomial coefficients: for $\alpha\geq0$ and $\beta\geq 1$, we have
\begin{align}
\label{eq:a+b+1 choose a+2 identity}
    \sum_{\ell=0}^{\beta-1} (\beta-\ell) \binom{\alpha+\ell}{\alpha}
    &= \binom{\alpha+\beta+1}{\alpha+2}.
\end{align}
A combinatorial proof of this identity is as follows:
the right-hand side counts the $(\alpha+2)$-element subsets of $[\alpha+\beta+1]$.
The left-hand side does the same, where the $\ell$th term counts those $(\alpha+2)$-element subsets of $[\alpha+\beta+1]$ whose second largest element is $\alpha+1+\ell$.

We now proceed with the calculation. 
\begin{align*}
    \mathbb E\left(\overline\mu_{2m}\right) 
    &= \sum_{i=1}^n \mathbb E\left(\binom{w(i)-i+m-1}{2m}\right) \\
    &= \sum_{i=1}^n \sum_{j=1}^n \frac1n \binom{j-i+m-1}{2m} \\
    &= \frac1n \sum_{k=1-n}^{n-1} (n-|k|) \binom{k+m-1}{2m} \\
    &= \frac1n\left( \sum_{k=m+1}^{n-1} (n-k) \binom{k+m-1}{2m}
    + \sum_{k=m}^{n-1} (n-k) \underbrace{\binom{-k+m-1}{2m}}_{(-1)^{2m}\binom{k+m}{2m}}\right) \\
    &= \frac1n \left(
    \underbrace{
        \sum_{\ell=0}^{n-m-2} (n-m-1-\ell) \binom{2m+\ell}{2m}
    }_{\text{
        Equation~\eqref{eq:a+b+1 choose a+2 identity} with
        $\alpha=2m$, 
        $\beta=n-m-1$
    }}
    + \underbrace{
        \sum_{\ell=0}^{n-m-1} (n-m-\ell) \binom{2m+\ell}{2m}
    }_{\text{
        Equation~\eqref{eq:a+b+1 choose a+2 identity} with
        $\alpha=2m$,
        $\beta=n-m$
    }}
    \right) \\
    &= \frac1n\left( \binom{n+m}{2m+2} + \binom{n+m+1}{2m+2}\right) \\
    &= \frac1n\left(\frac{n-m-1}{2m+2} + \frac{n+m+1}{2m+2}\right) \binom{n+m}{2m+1}\\
    &= \frac{1}{m+1} \binom{n+m}{2m+1}.\qedhere
\end{align*}
\end{proof}

\section{Pattern-positive expansions}\label{section:reduced words}

We now turn our attention from pattern-finite expansions to another enticing class of expansions: pattern-positive statistics. Recall that a statistic is \textit{pattern-positive} if the coefficients in its pattern expansion are nonnegative. We start with a simple observation about pattern-positive statistics.

\begin{lemma}\label{lem:positivemonotonic}
    Pattern-positive statistics are nonnegative and monotonic with respect to pattern containment. In other words, if $\sigma$ is a pattern-positive statistic then $\sigma(w)\geq 0$ for all $w$, and whenever $p$ occurs as a pattern in $w$, we have $\sigma(p)\leq \sigma(w).$
\end{lemma}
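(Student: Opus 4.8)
The plan is to prove both claims directly from the pattern expansion $\sigma = \sum_{p \in \S} \sigma_p \cdot [p]$ together with the hypothesis that every coefficient $\sigma_p$ is nonnegative, using only elementary facts about pattern-count functions already recorded in Section~\ref{section:tools}.

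\medskip

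\noindent\textbf{Nonnegativity.} First I would fix an arbitrary $w \in \S$ and evaluate the pattern expansion at $w$, obtaining $\sigma(w) = \sum_{p \in \S} \sigma_p \cdot [p](w)$. This sum is finite, since $[p](w) = 0$ whenever $|p| > |w|$, so there is no convergence issue. Each term is a product of the nonnegative coefficient $\sigma_p$ with the value $[p](w) \in \N$, which is nonnegative by Definition~\ref{defn:pattern-count}. Hence every term is nonnegative, and therefore so is the sum: $\sigma(w) \ge 0$. Since $w$ was arbitrary, $\sigma$ is a nonnegative statistic.

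\medskip

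\noindent\textbf{Monotonicity.} Now suppose $p$ occurs as a pattern in $w$. The goal is to show $\sigma(w) - \sigma(p) \ge 0$. I would write both values using the pattern expansion and subtract term by term:
\[
    \sigma(w) - \sigma(p) = \sum_{q \in \S} \sigma_q \cdot \big([q](w) - [q](p)\big).
\]
The key point is that $[q](w) \ge [q](p)$ for every pattern $q$: this is the transitivity of pattern containment, namely that any $q$-occurrence inside a $p$-occurrence of $w$ is in particular a $q$-occurrence of $w$, so composing the embedding of indices realizing the $p$-occurrence with those realizing a $q$-occurrence in $p$ gives an injection from $q$-occurrences of $p$ into $q$-occurrences of $w$. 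Thus each summand $\sigma_q \cdot ([q](w) - [q](p))$ is a product of two nonnegative numbers, hence nonnegative, and the total is nonnegative, giving $\sigma(p) \le \sigma(w)$.

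\medskip

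\noindent The only mild subtlety — the step I would be most careful to state cleanly — is the inequality $[q](w) \ge [q](p)$ when $w$ contains $p$; everything else is immediate from nonnegativity of the coefficients and finiteness of the relevant sums. This is the "term-by-term" argument already alluded to in the introduction, and no deeper input is needed.
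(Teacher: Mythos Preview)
Your proof is correct and follows essentially the same approach as the paper's: both establish nonnegativity by observing that each term $\sigma_q\cdot[q](w)$ in the pattern expansion is nonnegative, and both establish monotonicity via the term-by-term inequality $[q](p)\le[q](w)$ coming from transitivity of pattern containment. The only difference is that you spell out the argument in slightly more detail.
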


\begin{proof}
    For any pattern-positive statistic $\sigma$, the expression $\stat(w) = \sum_q \coeff{\stat}{q} [q](w)$ is nonnegative because both $\coeff{\stat}{q}$ and $[q](w)$ are always nonnegative. For $p$ occurring as a pattern in $w$ we have $[q](p)\leq [q](w)$ for all $q$, as any occurrence of $q$ in $p$ also gives an occurrence of $q$ in $w$. We therefore have
    \[ \stat(p) = \sum_{q} \stat_q [q](p)\leq \sum_{q} \stat_q [q](w) = \stat(w).\]
\end{proof}

By Lemma~\ref{lem:positivemonotonic}, any integer-valued pattern-positive statistic $\stat$ has nonnegative integer outputs. There is therefore no loss of generality in studying the pattern-positivity of statistics $\stat$ whose outputs $\stat(w)$ are described as the cardinalities of finite sets. This motivates the following definition. An \textit{enumerative statistic} $\stat$ is a statistic given by the cardinality $\stat(w) = |\set(w)|$ of some finite set $\set(w)$ associated to $w$. Note that any statistic with nonnegative integer values can be given as an enumerative statistic, but we use the term so we may reason about the associated sets of objects directly. Many common statistics in algebraic combinatorics are defined enumeratively. For example, the statistics counting reduced words, descents, and length can all be defined in this way. Some common statistics, for example the Schubert specialization statistic, have several natural enumerative descriptions. Our results below do not depend on the particular enumerative interpretation chosen for the statistic.

Monotonicity, with respect to pattern containment, of an enumerative statistic has a clear combinatorial meaning. Indeed, an enumerative statistic $\stat(w) = |\set(w)|$ is monotonic if and only if there are injective maps $\set(p)\hookrightarrow \set(w)$ for any $p$ that occurs as a pattern in $w$. 

Our motivating question is: What kind of combinatorial structure on the sets $\set(w)$ corresponds to pattern-positivity of the corresponding enumerative statistic? By Lemma~\ref{lem:positivemonotonic} pattern-positive statistics are, in particular, monotonic, so one might expect the structure to consist of inclusions $\set(p)\hookrightarrow \set(w)$ with some additional properties. As we will show, the following combinatorial structure is one possible answer to this question.

\begin{definition}\label{defn:incexcstructure}
An \textit{inclusion-exclusion structure} for an enumerative statistic $|\set(w)|$ is a family of inclusions $\set(p)\hookrightarrow \set(w)$, indexed by pattern occurrences $p$ in $w$, that respects intersections. That is, for $I,J\subseteq [1,n]$, we have
\[ \set(w|_{I\cap J}) = \set(w|_I) \cap \set(w|_J), \]
where we identify these sets with their images in $\set(w)$. 
For an enumerative statistic $\sigma$ equipped with an inclusion-exclusion structure, an object $a\in \set(w)$ is \textit{essential} if it is not in the image of the map $\set(p)\hookrightarrow \set(w)$ for any proper pattern $p$ of $w$ (that is, the pattern $p$ is neither $\varnothing$ nor $w$). The set of essential elements of $\set(w)$ is the \textit{essential set}, denoted $\Eset(w)\subseteq \set(w)$.
\end{definition}

\begin{proposition}\label{prop:positivitycriterion}
    Any enumerative statistic $\stat(w) = |\set(w)|$ with an inclusion-exclusion structure is pattern-positive with coefficients $\coeff{\stat}{p} = |\Eset(p)|$.
\end{proposition}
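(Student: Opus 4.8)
The plan is to show that the numbers $|\Eset(p)|$, when used as coefficients, reproduce the enumerative statistic $\stat$ via the pattern expansion formula, and then invoke the uniqueness clause of Proposition~\ref{prop:every statistic has a pattern expansion} to conclude that $\coeff{\stat}{p} = |\Eset(p)|$; pattern-positivity is then immediate since cardinalities are nonnegative. Concretely, fix $w \in S_n$ and consider the collection of subsets $\{\set(w|_I)\}_{I \subseteq [1,n]}$, all viewed as subsets of $\set(w)$ via the given inclusions. The inclusion-exclusion structure guarantees that this family is closed under intersection (with $\set(w|_{I \cap J}) = \set(w|_I) \cap \set(w|_J)$) and that $\set(w|_{[1,n]}) = \set(w)$ is the top element. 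The goal is the identity
\[
    |\set(w)| = \sum_{I \subseteq [1,n]} |\Eset(w|_I)|,
\]
since grouping the subsets $I$ by the pattern $p = w|_I$ they induce turns the right-hand side into $\sum_{p} [p](w) \cdot |\Eset(p)|$, which is exactly the pattern expansion evaluated at $w$ with the proposed coefficients.

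The key step is to prove this identity by showing that the sets $\Eset(w|_I)$, as $I$ ranges over all subsets of $[1,n]$, partition $\set(w)$. First I would verify that they are pairwise disjoint: suppose $a \in \Eset(w|_I) \cap \Eset(w|_J)$ with $I \neq J$. Since $a$ lies in the image of $\set(w|_{I \cap J})$ (by the intersection-respecting property, as $a \in \set(w|_I) \cap \set(w|_J) = \set(w|_{I\cap J})$), and since $I \cap J$ is a proper subset of at least one of $I, J$ — say $I \cap J \subsetneq I$ — this contradicts $a$ being essential for $w|_I$ unless $|I \cap J| = 0$, i.e.\ $I \cap J = \varnothing$; but then $a \in \set(\varnothing)$, which has a single element that is in the image of every $\set(w|_K)$, contradicting essentiality for any nonempty $I$. (One must handle the boundary case $|I| = 0$ or $|I| = n$ carefully according to the "proper pattern" convention in Definition~\ref{defn:incexcstructure}; note $\Eset(\varnothing) = \set(\varnothing)$ which is a singleton and accounts for the constant term.) Next I would show that every $a \in \set(w)$ lies in some $\Eset(w|_I)$: let $I$ be minimal (with respect to inclusion, which is well-defined because the family is closed under intersection — the intersection of all $J$ with $a \in \set(w|_J)$ is the unique smallest such set) subject to $a \in \set(w|_I)$; then by minimality $a$ is not in the image of $\set(w|_{I'})$ for any $I' \subsetneq I$, so $a \in \Eset(w|_I)$.

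With the partition established, the displayed identity follows by summing cardinalities, and then
\[
    \stat(w) = |\set(w)| = \sum_{I \subseteq [1,n]} |\Eset(w|_I)| = \sum_{p \in \S} [p](w) \cdot |\Eset(p)|
\]
for every $w$, so by the uniqueness in Proposition~\ref{prop:every statistic has a pattern expansion} we get $\coeff{\stat}{p} = |\Eset(p)| \geq 0$, proving pattern-positivity.

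\textbf{Main obstacle.} The subtle point, and the one I expect to require the most care, is the existence of a well-defined \emph{minimal} $I$ with $a \in \set(w|_I)$: this uses crucially that the family $\{\set(w|_I)\}$ is closed under intersection and that intersections are computed "on the nose" via the index sets ($\set(w|_I) \cap \set(w|_J) = \set(w|_{I \cap J})$, not merely $\subseteq$), which is precisely what the inclusion-exclusion axiom provides. A secondary care point is bookkeeping around the empty and full patterns and the convention that "proper pattern" excludes both $\varnothing$ and $w$ itself — one should double check that $\Eset(w)$ as defined (elements not hit by any \emph{proper} sub-pattern) matches the set of $a$ whose minimal index set is all of $[1,n]$, and that the singleton $\set(\varnothing)$ contributes the correct constant coefficient $\coeff{\stat}{\varnothing} = \stat(\varnothing)$.
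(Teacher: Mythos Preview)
Your route is genuinely different from the paper's. The paper works pointwise at a single pattern $p$: it invokes Lemma~\ref{lem:coeff in terms of stat} to write $\coeff{\stat}{p}$ as the alternating sum $\sum_{I\subseteq [1,n]} (-1)^{n-|I|}|\set(p|_I)|$, sets $A_i = \set(p|_{[1,n]\setminus\{i\}})\subseteq\set(p)$, and recognizes the alternating sum as the classical inclusion--exclusion count $|\set(p)\setminus\bigcup_i A_i| = |\Eset(p)|$. You instead try to realize the whole pattern expansion at once by partitioning $\set(w)$ into essential pieces and invoking uniqueness. That is a stronger statement --- it is exactly the ``refined combinatorial interpretation'' the paper asserts informally in the paragraph following the proof --- so your argument, if it goes through, buys more.

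There is, however, a real gap in both your disjointness and covering steps. You move from ``$a\in\set(w|_{I\cap J})$ as a subset of $\set(w)$'' to ``$a$ lies in the image of a proper-pattern map into $\set(w|_I)$, hence is not essential for $w|_I$.'' But Definition~\ref{defn:incexcstructure} only supplies, for each fixed ambient permutation, inclusions into that permutation's $\set$ together with the intersection axiom; it says nothing about how the structure maps into $\set(w)$ relate to the structure maps into $\set(w|_I)$. Knowing $\set(w|_{I\cap J})\subseteq\set(w|_I)$ inside $\set(w)$ does not tell you that the preimage of $a$ in $\set(w|_I)$ lies in the image of the structure map $\set\big((w|_I)|_{K'}\big)\hookrightarrow\set(w|_I)$. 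The same issue bites in the covering step: minimality of $I$ only says $a\notin\set(w|_K)$ in $\set(w)$ for $K\subsetneq I$, not that $a$ avoids the intrinsic proper-pattern images in $\set(w|_I)$. The clean fix is to run your partition with the \emph{extrinsic} sets $E_I:=\set(w|_I)\setminus\bigcup_{K\subsetneq I}\set(w|_K)$ (all inside $\set(w)$), for which disjointness and covering follow from the intersection axiom alone; specializing to $w=p$, $I=[1,|p|]$ then identifies $E_{[1,|p|]}$ with $\Eset(p)$ --- which is exactly the paper's computation. Your flagged worry about the $\varnothing$ boundary is also genuine and is shared by the paper's final identification: one needs $|p|\ge 2$ so that the image of $\set(\varnothing)$ is already absorbed into some proper-pattern image.
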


\begin{proof}
    By Lemma~\ref{lem:coeff in terms of stat}, we have
    \[ \coeff{\stat}{p} = \sum_{I\subseteq [1,n]} (-1)^{n-|I|}\cdot\stat(p|_I) = \sum_{I\subseteq [1,n]} (-1)^{n-|I|}\cdot|\set(p|_I)|. \]
    
    For $i=1,\dots,n$, define $A_i= \set(p|_{[1,n]\setminus\{i\}})$ considered as a subset of $\set(p)$ via the maps defining the inclusion-exclusion structure. The definition of an inclusion-exclusion structure implies that for any $I\subseteq[1,n]$, we have $\set(p|_{I}) = \bigcap_{i\in I} A_i$. The claim now follows from the inclusion-exclusion formula,
    \[
        \coeff{\stat}{p} = \sum_{I\subseteq [1,n]} (-1)^{n-|I|}\cdot\left|\bigcap_{i\in I} A_i\right| = \left|\set(p)\setminus \bigcup_{i=1}^n A_i\right| = |\Eset(p)|.
        \qedhere
    \]
\end{proof}

In the following section we give a nontrivial example of an inclusion-exclusion structure on the sets of reduced words of a permutation. In the following example we illustrate the definitions above for a simple statistic that has a readily available and positive pattern expansion.

\begin{example}
    Let $\sigma(w)$ be the statistic enumerating $\IS(w)$, the set of increasing (possibly empty) subsequences of $w$. For example, we have $\IS(132) = \{ \varnothing, 1,3,2,13,12\}$ and so $\sigma(132) = 6$. We clearly have the expansion 
    \[ \sigma = \sum_{n\geq 0} [\mathrm{id}_n] = 1 + [1] + [12]+[123]+[1234]+\cdots.\]
    For any pattern occurrence of $p$ in $w$, any increasing subsequence of $p$ naturally corresponds to an increasing subsequence in $w$ consisting of the corresponding entries. This gives maps $\IS(p)\hookrightarrow \IS(w)$. One easily checks that these maps satisfy the definition of an inclusion-exclusion structure. 
    An increasing subsequence of $w$ on indices $J$ is in the image of the map $\set(w|_J)\hookrightarrow\set(w)$. Therefore, the only essential increasing subsequences are the full subsequences $12\cdots n$ in the identity permutations $\mathrm{id}_n = 12\cdots n\in S_n$. Proposition~\ref{prop:positivitycriterion} then recovers the pattern expansion for $\sigma$ given above.
\end{example}

In fact, an inclusion-exclusion structure on an enumerative statistic $\sigma$ gives a combinatorial interpretation for every term in the expression for $\sigma(w)$ obtained by pattern expanding $\sigma$. Every element of $\set(w)$ is, in a unique way, the image of an essential element $\Eset(p)$ of some pattern $p$ of $w$. For a given $p$ and $w$, the number of such elements of $\set(w)$ is equal to $|\Eset(p)|\cdot [p](w)$

\subsection{A case study: enumerating reduced words}

A \textit{reduced word} for a permutation $w$ is an expression $w=s_{i_\ell}\cdots s_{i_1}$ as a product of adjacent transpositions, where $s_i$ transposes $i$ and $i+1$ and fix all other indices. Let $\RW(w)$ be the set of all reduced words for $w$ and $\rw(w) = |\RW(w)|$. Using the techniques described in Section~\ref{section:tools}, we compute the first part of the pattern expansion of $\rw$ to be
\begin{align*}
    \rw = [\varnothing] + [321] + [2143] + [2413] + [2431] + [3142] + [3241] + [3412]\\ + 2[3421] + [4132] + [4213] + 3[4231] + 2[4312] + 11[4321] + \cdots.
\end{align*}
In this section we construct an inclusion-exclusion structure on the reduced words of permutations, establishing the pattern-positivity of the statistic $\rw(w)$.

An \textit{inversion} of $w$ indicates an occurrence of the pattern $21$: a pair of indices $(a,b)$ such that $a<b$ and $w(a)>w(b)$. We say that a transposition $s_{i_k}$ in a reduced word $w=s_{i_\ell}\cdots s_{i_1}$ \textit{creates an inversion $(a,b)$} if $(a,b)$ is an inversion of $s_{i_k}\cdots s_{i_1}$ but is not an inversion of $s_{i_{k-1}}\cdots s_{i_1}$. Explicitly, this means that $s_{i_{k-1}}\cdots s_{i_1}(a) = i_k$ and $s_{i_{k-1}}\cdots s_{i_1}(b) = i_k+1$. 

We read reduced words from right to left (as function compositions) and will sometimes represent reduced words as \textit{wiring diagrams} (see Figure~\ref{fig: rw projection}) that record the order of the inversions as wire crossings from right to left. Our convention for wiring diagrams is that the ground set $1,\dots,n$ on which our permutations act is identified with the wires on the right hand side from top to bottom. 

For a pattern $p=w|_A$ occurring on index set $I \subseteq [1,n]$, there is a natural restriction map $\pi:\RW(w)\to \RW(p)$ given by recording only the inversions between indices in $I$ in the reduced word for $w$. In terms of wiring diagrams, this operation is given by removing all wires starting, on the right, at indices outside of $I$. 

\begin{example}\label{ex:restriction}
    The permutation $w=3421$ contains the pattern $p=231$ on the indices $I = \{1,2,4\} = \{i_1,i_2,i_3\}$. The corresponding restriction of the reduced word $s_2 s_1 s_2 s_3 s_2\in \RW(3421)$ to $\RW(231)$ can be found by removing the wire corresponding to $w(3)=2$, position $3$ on the right and position $2$ on the left, from the wiring diagram as shown in Figure~\ref{fig: rw projection}.
    \begin{figure}[htbp]
        \begin{center}
            \begin{tikzpicture}
            \pic[
            rotate=-90,
            braid/.cd,
            every strand/.style={thick},
            strand 1/.style={black},
            strand 2/.style={black},
            strand 3/.style={dashed},
            strand 4/.style={black},
            gap = 0.01] (coords)
            {braid={s_2 s_3 s_2 s_1 s_2 }};
            \node[at=(coords-1-s),pin=east:{$1=i_1$}] {};
            \node[at=(coords-2-s),pin=east:{$2 = i_2$}] {};
            \node[at=(coords-3-s),pin=east:3] {};
            \node[at=(coords-4-s),pin=east:{$4=i_3$}] {};

            \node at (3.2,-1.5) {$\xmapsto{\displaystyle \quad \pi \quad}$};
            
            \pic at (7,-.5) [
            rotate=-90,
            braid/.cd,
            every strand/.style={thick},
            strand 1/.style={black},
            strand 2/.style={black},
            strand 3/.style={black},
            gap = 0.01] (coords)
            {braid={s_2 s_1 }}; 
            \node[at=(coords-1-s),pin=east:1] {};
            \node[at=(coords-2-s),pin=east:2] {};
            \node[at=(coords-3-s),pin=east:3] {};
            \end{tikzpicture}
        \end{center}
        \caption{The reduced word $s_2 s_1 s_2 s_3 s_2\in \RW(3421)$ restricts to $s_1s_2\in \RW(231)$ for the $231$-pattern that occurs on indices $\{1,2,4\}$ in $3421$.}
        \label{fig: rw projection}
    \end{figure}
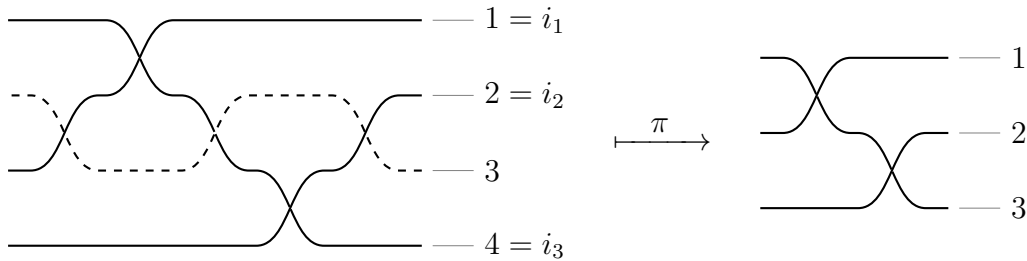
\end{example}

One can check that these restriction maps $\pi:\RW(w)\to\RW(p)$ are surjective; i.e., every reduced word for $p$ is the restriction of some reduced word for $w$. Our inclusion-exclusion system on reduced words defined below consists of specially chosen right-inverses $\iota:\RW(p)\hookrightarrow \RW(w)$ to these restriction maps.

\begin{definition}
    For a pattern occurrence $p$ in $w$, the \textit{minimal lift} of a reduced word $x\in \RW(p)$ to $w$ is the reduced word $\iota(x) = s_{i_\ell}\cdots s_{i_1}$ of $w$ whose sequence of indices $(i_1,\dots,i_\ell)$ is the lexicographic minimum among $\pi^{-1}(x)$. In other words, we have $\iota(x)\in \pi^{-1}(x)$ and for any other word $s_{i_\ell'}\cdots s_{i_1'}\in \pi^{-1}(x)$, there is an index $k$ such that $i_1 = i_1', \, \cdots, i_k = i_k'$, and $i_{k+1}<i_{k+1}'$. 
\end{definition}

The following terminology will be useful in describing the minimal lift map more concretely.

\begin{definition}
    We say that a word $s_{i_k}\cdots s_{i_1}$ is a \textit{partial reduced word} for a permutation $w$ if it is a right-side prefix of a reduced word for $w$. Equivalently, $s_{i_k}\cdots s_{i_1}$ is a partial reduced word for $w$ if \[ \ell(w(s_{i_k}\cdots s_{i_1})^{-1}) = \ell(w s_{i_1}\cdots s_{i_k}) = \ell(w)-k.\]   A transposition $s_i$ is called an \textit{extension} of a partial reduced word $s_{i_k}\cdots s_{i_1}$ for $w$ if $s_is_{i_k}\cdots s_{i_1}$ is also a partial reduced word for $w$, i.e. if $\ell(w s_1\cdots s_{i_k}s_i)=\ell(w)-k-1$.
\end{definition}

The minimal lift of a reduced word $x\in \RW(p)= \RW(w|_I)$ to $w$ can be constructed inductively. Indeed, suppose that we have constructed some initial (right) segment $s_{i_k}\cdots s_{i_1}$ of $\iota(x)$, so that $\pi(y)$ agrees with some initial segment of the given reduced word $x$ for $p$. The next transposition $s_{i_{k+1}}$ in $\iota(x)$ is defined by setting $ i_{k+1}$ to be the minimal index $i$ such that $s_i s_{i_k}\cdots s_{i_1}$ is a partial reduced word for $w$ and $\pi(s_i s_{i_k}\cdots s_{i_1})$ is some initial segment of the given reduced word $x\in \RW(p)$.

\begin{example}\label{ex:lift}
    Consider the permutation $w = 4231$ containing the pattern $p=321$ on the set of indices $I = \{1,3,4\}$. We will construct the minimal lift of the reduced word $x = s_2 s_1 s_2$ for $p$ to $w$. 
    \begin{enumerate}
        \item The extensions of the empty partial reduced word for $w$ are $s_1$ and $s_3$. These transpositions restrict to the empty word and $s_2$ respectively, both of which agree with some initial (right) segment of the given word $x= s_2 s_1 s_2$, so we set $s_{i_1}=s_1$ because it has the smaller index.
        \item The extensions of the partial reduced word $s_1$ for $w$ are $s_2$ and $s_3$. The partial reduced word $s_2 s_1$ restricts to $s_1$ as a partial reduced word for $p$, while $s_3 s_1$ restricts to $s_2$. Only the latter restriction is an initial segment of $x=s_2 s_1 s_2$, so we choose $s_{i_2} = s_3$.
        \item There is only one extension of the partial reduced word $s_3 s_1$ for $w$, namely $s_2$. The restriction of $s_2 s_3 s_1$ to $p$ is $s_1 s_2$, which is an initial segment of $x$, so we take $s_{i_3} = s_2$.
        \item The extensions of the partial reduced word $s_2 s_3 s_1$ for $w$ are $s_1$ and $s_3$. The partial reduced word $s_1 s_2 s_3 s_1$ restricts to $s_1 s_2$ while the partial reduced word $s_3 s_2 s_3 s_1$ restricts to $s_2 s_1 s_2$. Both are initial segments of $x$, so we choose $s_{i_4} = s_1$ because it has the smaller index.
        \item There is only one extension of the partial reduced word $s_1 s_2 s_3 s_1$ for $w$, namely $s_3$. The restriction of $s_3 s_1 s_2 s_3 s_1$ to $p$ is $x$, so we take $s_{i_5} = s_3$.
    \end{enumerate}
    Thus, the minimal lift of $x$ to $w$ is the reduced word $s_3 s_1 s_2 s_3 s_1$. These reduced words are shown as wiring diagrams in Figure~\ref{fig: wiring diagram lift}. 
    
    \begin{figure}[htbp]
        \begin{center}
            \begin{tikzpicture}
            \pic at (0,-.5) [
            rotate=-90,
            braid/.cd,
            every strand/.style={thick},
            strand 1/.style={black},
            strand 2/.style={black},
            strand 3/.style={black},
            gap = 0.01] (coords)
            {braid={s_2 s_1 s_2}}; 
            \node[at=(coords-1-s),pin=east:1] {};
            \node[at=(coords-2-s),pin=east:2] {};
            \node[at=(coords-3-s),pin=east:3] {};

            \node at (2.5,-1.5) {$\xmapsto{\displaystyle \quad \iota \quad}$};
            
            \pic at (9,0) [
            rotate=-90,
            braid/.cd,
            every strand/.style={thick},
            strand 1/.style={black},
            strand 2/.style={dashed},
            strand 3/.style={black},
            strand 4/.style={black},
            gap = 0.01] (coords)
            {braid={s_1 s_3 s_2 s_1 s_3 }};
            \node[at=(coords-1-s),pin=east:{$1=a_1$}] {};
            \node[at=(coords-2-s),pin=east:2] {};
            \node[at=(coords-3-s),pin=east:{$3 = a_2$}] {};
            \node[at=(coords-4-s),pin=east:{$4=a_3$}] {};
            \end{tikzpicture}
        \end{center}
        \caption{The minimal lift of the reduced word $s_2s_1s_2\in \RW(321)$ from the $321$-pattern that occurs on indices $\{1,3,4\}$ in $4231$ is $s_3 s_1 s_2 s_3 s_1\in \RW(4231)$.}
        \label{fig: wiring diagram lift}
    \end{figure}
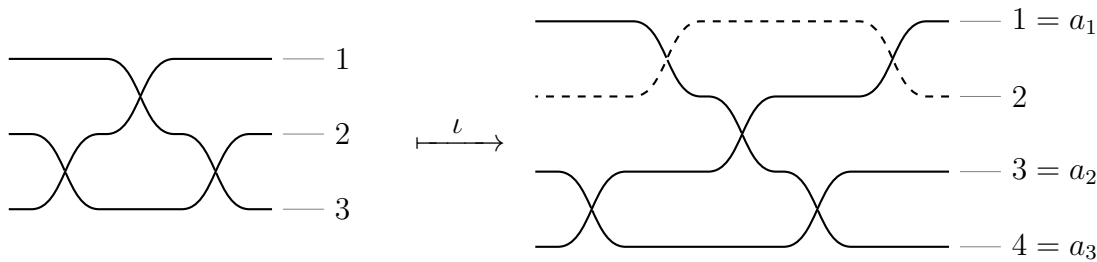
\end{example}

\begin{proposition}\label{prop:composition}
    The composition of two minimal lifts is a minimal lift. In other words, for any pattern $p$ in $w$ and pattern $q$ in $p$, the following diagram of minimal lift maps commutes.
    \[\begin{tikzcd}
        & \RW(w) \\
        \RW(q) \arrow[r] \arrow[ur] & \RW(p) \arrow[u]
    \end{tikzcd}\]
\end{proposition}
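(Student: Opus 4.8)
The plan is to show that the minimal lift $\RW(q) \to \RW(w)$ factors as the minimal lift $\RW(q) \to \RW(p)$ followed by the minimal lift $\RW(p) \to \RW(w)$. The key tool is the inductive characterization of the minimal lift already established in the text: given a partial reduced word $s_{i_k}\cdots s_{i_1}$ of the lift-in-progress, the next letter $s_{i_{k+1}}$ is chosen so that $i_{k+1}$ is the minimal index $i$ for which $s_i s_{i_k}\cdots s_{i_1}$ remains a partial reduced word for the target and restricts to an initial segment of the given reduced word downstairs. So it suffices to prove that the "greedy" rule for lifting directly from $q$ to $w$ produces the same sequence of indices as the composite greedy rule (lift $x \in \RW(q)$ to $\iota_{qp}(x) \in \RW(p)$, then lift that to $\iota_{pw}(\iota_{qp}(x)) \in \RW(w)$).

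First I would fix notation: let $I$ be the index set on which $p = w|_I$ occurs in $w$, let $J' \subseteq [1,|p|]$ be the index set on which $q = p|_{J'}$ occurs in $p$, and let $J \subseteq [1,n]$ be the composite index set on which $q$ occurs in $w$ (so $J = \{i_j : j \in J'\}$ writing $I = \{i_1 < \cdots\}$). Let $\pi_{pw}\colon \RW(w)\to\RW(p)$, $\pi_{qp}\colon\RW(p)\to\RW(q)$, and $\pi_{qw}\colon\RW(w)\to\RW(q)$ be the restriction maps; these visibly satisfy $\pi_{qp}\circ\pi_{pw} = \pi_{qw}$ (restricting a wiring diagram to $I$ then to $J'$ is the same as restricting directly to $J$). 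The content of the proposition is the analogous identity for the right-inverses, which is NOT automatic. I would then argue by induction on the length of the partial reduced word being built: assuming the composite greedy algorithm and the direct greedy algorithm for $\RW(q)\to\RW(w)$ have produced the same partial reduced word $\beta = s_{i_k}\cdots s_{i_1}$ of $w$ so far, I need to show they choose the same next letter. The direct algorithm picks the minimal $i$ such that $s_i\beta$ is a partial reduced word for $w$ and $\pi_{qw}(s_i\beta)$ is an initial segment of $x$. The composite algorithm, run on $\iota_{pw}(\iota_{qp}(x))$, picks the minimal $i$ such that $s_i\beta$ is a partial reduced word for $w$ and $\pi_{pw}(s_i\beta)$ is an initial segment of the fixed target word $\iota_{qp}(x)\in\RW(p)$.

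The crux — and what I expect to be the main obstacle — is reconciling these two selection criteria, since "$\pi_{pw}(s_i\beta)$ is an initial segment of $\iota_{qp}(x)$" is superficially a stronger (more restrictive) condition than "$\pi_{qw}(s_i\beta)$ is an initial segment of $x$." The point I would need to nail down is that among indices $i$ passing the weaker direct test, the minimal one \emph{also} passes the stronger composite test, using the fact that $\iota_{qp}(x)$ is itself built greedily. Concretely: $\pi_{qw}(s_i\beta) = \pi_{qp}(\pi_{pw}(s_i\beta))$, so if $s_i$ does not add a crossing between two wires of $I$ then $\pi_{pw}(s_i\beta) = \pi_{pw}(\beta)$, which is already an initial segment of $\iota_{qp}(x)$, and $i$ passes the composite test trivially. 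If $s_i$ does add a crossing between two wires of $I$, then $\pi_{qw}(s_i\beta)$ extends $\pi_{qw}(\beta)$ by (possibly) a crossing of $J$-wires; I would show that the greedy construction of $\iota_{qp}(x)$ forces its next letter after $\pi_{pw}(\beta)$ to be exactly this crossing of $I$-wires whenever that crossing is a legal extension for $p$ that restricts correctly to $q$ — i.e., the direct algorithm never "gets ahead of" the intermediate lift, because any move it makes that is legal for $w$ and correct for $q$ is, after restriction, a legal move for $p$ that is correct for $q$, and minimality of $\iota_{qp}$ means $\iota_{qp}(x)$ must take that same move. This is the one place where a genuine argument about the greedy algorithm (rather than bookkeeping) is required; once it is in place, the induction closes and the diagram commutes. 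I would write out the wiring-diagram picture to make the "adds a crossing of two $I$-wires" dichotomy transparent, and then present the induction cleanly.
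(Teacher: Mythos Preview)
Your plan is essentially the paper's: both use the greedy description of the minimal lift and run an induction on the length of the partial word built so far, and both boil the inductive step down to checking that the next letter chosen by the direct lift $\iota_{qw}$, when it crosses two $I$-wires, is exactly the next letter of $y=\iota_{qp}(x)$. The paper packages this slightly differently (first reducing to ``$\pi_{pw}(z)=y$'' via the observation that $\pi_{pw}^{-1}(y)\subseteq\pi_{qw}^{-1}(x)$, so the minimum of the larger fiber, once shown to lie in the smaller fiber, is automatically its minimum too), but the substantive induction is the same as yours.

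There is, however, a genuine gap at exactly the spot you flag as the crux. In Case~2 you write that the restricted move is ``a legal move for $p$ that is correct for $q$, and minimality of $\iota_{qp}$ means $\iota_{qp}(x)$ must take that same move.'' Minimality of $\iota_{qp}$ only forces $y$ to take the \emph{minimal} such move in $p$; you have shown the restricted move is \emph{a} candidate, not the smallest one. The missing argument is: any legal extension of $\pi_{pw}(\beta)$ in $p$ with strictly smaller index corresponds to two $I$-wires that, in the $w$-wiring after $\beta$, sit at positions $a<b$ with only non-$I$ wires between them and with $a<i_{k+1}$; among positions $a,\dots,b$ there must then be an adjacent inversion of $w\beta^{-1}$, giving a legal extension $s_h$ of $\beta$ in $w$ with $h<i_{k+1}$ that either involves a non-$I$ wire (hence leaves $\pi_{qw}$ unchanged and passes the direct test) or is the very $I$-crossing in question (hence passes the direct test iff the $p$-extension did). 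Either way this contradicts the minimality of $i_{k+1}$ in the direct algorithm. This is precisely the content of the paper's paragraph ``there are no additional extensions of the restricted word with smaller index than the restriction of $s_{i_k}$,'' and it is what your sketch still needs.
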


\begin{proof}
    Let $x\in \RW(q)$ be a reduced word with minimal lifts $y\in \RW(p)$ and $z\in \RW(w)$. We claim that it suffices to show that $z$ restricts to $y\in \RW(p)$. Indeed, this $z$ is the minimal reduced word for $w$ that restricts to $x\in \RW(q)$, which includes all reduced words that restrict to $y\in \RW(p)$. Therefore if $z$ restricts to $y\in \RW(p)$, it must be the minimal such word.

    Now we will show that the restriction of $z$ to $\RW(p)$ is $y$, by induction on the length of their initial (right) segments that agree. We use the notation $I\subseteq J \subseteq [1,n]$ for the index sets with $q=w|_I$ and $p=w|_J$. Suppose $s_{i_k}\cdots s_{i_1}$ is some initial segment of $z$ such that the restriction of $s_{i_{k-1}}\cdots s_{i_1}$ to $p$ coincides with some initial segment of $y$, and that the next transposition $s_{i_k}$ in $z$ creates an inversion between the pair of distinct indices $j,j'\in J$. By the minimality of $z$ as a lift of $x$, any extension $s_h$ of this partial word with $h<i_k$ must create an inversion between indices $i,i'\in I$ that is not the next inversion created in the given reduced word $x$. 
    
    Now consider the restriction of $s_{i_{k-1}}\cdots s_{i_1}$ to a partial reduced word for $p$ and its extensions. The restriction of any extension of $s_{i_{k-1}}\cdots s_{i_1}$ that creates an inversion between indices in $J$ will again be an extension of the restricted partial word.
    We claim that there are no additional extensions of the restricted word with smaller index than the restriction of $s_{i_k}$.
    Indeed, such extensions correspond to inversions of $w$ between indices $j_1,j_2 \in J$ which, after applying the partial word $s_{i_{k-1}}\cdots s_{i_1}$ are separated only by indices not in $J$.
    But observe that there must then be some extension of $s_{i_{k-1}}\cdots s_{i_1}$ creating an inversion between some pair taken from $j_1$, $j_2$, and these additional indices, which will either be $(j_1,j_2)$ itself or involve some index not in $J$. Since the only extensions of $s_{i_{k-1}}\cdots s_{i_1}$ with index smaller than $i_k$ are between indices in $I\subseteq J$, there can be no such additional extensions of the restriction of $s_{i_{k-1}}\cdots s_{i_1}$ with index smaller than the restriction of $s_{i_k}$. It follows that the restriction of $s_{i_k}$ is the next transposition in $y$, which completes the proof by induction.
\end{proof}

The main result of this section is that minimal lifts form an inclusion-exclusion structure.

\begin{theorem}\label{thm: reduced word structure}
    Minimal lift maps define an inclusion-exclusion structure on reduced words.
\end{theorem}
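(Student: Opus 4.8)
The plan is to verify the two defining properties of an inclusion-exclusion structure from Definition~\ref{defn:incexcstructure}: first, that minimal lifts are genuine inclusions (injective maps), and second, that they respect intersections. Injectivity is immediate, since each minimal lift $\iota \colon \RW(p) \hookrightarrow \RW(w)$ is a right-inverse to the restriction map $\pi \colon \RW(w) \to \RW(p)$ (i.e., $\pi \circ \iota = \mathrm{id}$), so $\iota$ must be injective. The content is therefore entirely in the intersection condition: for index sets $I, J \subseteq [1,n]$, identifying each $\RW(w|_K)$ with its image in $\RW(w)$ under the minimal lift, we must show
\[
    \iota\big(\RW(w|_{I\cap J})\big) = \iota\big(\RW(w|_I)\big) \cap \iota\big(\RW(w|_J)\big).
\]

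First I would establish the easy inclusion $\subseteq$. Since $I \cap J \subseteq I$ and $I \cap J \subseteq J$, Proposition~\ref{prop:composition} (functoriality of minimal lifts, applied to the pattern chains $w|_{I\cap J}$ in $w|_I$ in $w$, and $w|_{I\cap J}$ in $w|_J$ in $w$) shows that the minimal lift of any $x \in \RW(w|_{I\cap J})$ to $w$ factors through both $\RW(w|_I)$ and $\RW(w|_J)$; hence it lies in the intersection of their images. For the reverse inclusion $\supseteq$, suppose $z \in \RW(w)$ lies in both $\iota(\RW(w|_I))$ and $\iota(\RW(w|_J))$, say $z = \iota(y)$ with $y \in \RW(w|_I)$ and $z = \iota(y')$ with $y' \in \RW(w|_J)$. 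I would argue that $z$ is in fact the minimal lift of its own restriction $\pi_{I\cap J}(z) \in \RW(w|_{I\cap J})$. The restriction $\pi_{I\cap J}(z)$ is well-defined regardless; the point is to show $z$ is lexicographically minimal among all reduced words of $w$ restricting to it. Using the inductive characterization of minimal lifts via extensions of partial reduced words (as in the construction before Example~\ref{ex:lift} and the argument of Proposition~\ref{prop:composition}), suppose for contradiction that at some stage a smaller-indexed extension $s_h$ were available that still restricts compatibly to $\pi_{I\cap J}(z)$. That extension creates an inversion between two indices; if those indices both lie in $I$, it would contradict minimality of $z$ as the lift of $y \in \RW(w|_I)$, and similarly if both lie in $J$. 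The remaining case — the new inversion is between an index of $I \setminus J$ and an index of $J \setminus I$, or involves an index outside $I \cup J$ — requires the kind of "interpolation" argument used in the proof of Proposition~\ref{prop:composition}: an inversion between two far-apart indices forces the prior existence of a smaller-indexed extension creating an inversion between a closer pair, and by iterating this one produces a smaller-indexed extension whose inversion lies entirely within $I$ or entirely within $J$, again contradicting minimality of $z$ as the lift of $y$ or of $y'$.

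The main obstacle I anticipate is precisely this last case of the $\supseteq$ argument: ruling out "mixed" smaller-indexed extensions whose inversions straddle $I\setminus J$ and $J\setminus I$. The subtlety is that minimality of $z$ is separately witnessed by its being the lift of $y$ (controlling extensions with inversions inside $I$) and the lift of $y'$ (controlling extensions with inversions inside $J$), but neither condition directly forbids an extension whose inversion pairs an $I$-only index with a $J$-only index. The resolution should be the monotone-interpolation phenomenon from Proposition~\ref{prop:composition}: in a partial reduced word, whenever $s_h$ creates an inversion between positions that are currently separated, there is a smaller (or equal) value $h'$ whose extension creates an inversion between an adjacent pair of the separating positions; chasing this down the "staircase" one eventually lands on a pair both inside $I$ (respectively both inside $J$), at which point minimality of $z$ over $y$ (respectively $y'$) applies. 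I would state this interpolation step as a short lemma (or cite the relevant paragraph of Proposition~\ref{prop:composition}) to keep the main argument clean, and then conclude that $z = \iota(\pi_{I\cap J}(z))$, so $z \in \iota(\RW(w|_{I\cap J}))$, completing the proof. Finally, having verified both properties, Proposition~\ref{prop:positivitycriterion} immediately yields that $\rw(w) = |\RW(w)|$ is pattern-positive with $\coeff{\rw}{p} = |\Eset(p)|$.
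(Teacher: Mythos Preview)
Your proposal follows the paper's approach exactly in its skeleton: the inclusion $\subseteq$ via Proposition~\ref{prop:composition}, and the inclusion $\supseteq$ by taking $z$ in both images and arguing inductively along initial segments that $z$ must equal $\iota_{I\cap J}(\pi_{I\cap J}(z))$. That is the right plan. But your case analysis in the inductive step is inverted, and the ``main obstacle'' you identify is a red herring.

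Here is the issue. Suppose $s_h$ with $h<i_{k+1}$ is an extension. What does minimality of $z$ as the lift of $y=z|_I$ actually tell you? It says $s_h$ is \emph{not} $I$-compatible, i.e., appending $s_h$ fails to restrict to an initial segment of $z|_I$. This forces \emph{both} indices of the inversion created by $s_h$ to lie in $I$ (otherwise the restriction to $I$ would be unchanged and $s_h$ would be $I$-compatible). Likewise both indices lie in $J$. Hence both lie in $I\cap J$, and your ``mixed'' case---an inversion between $I\setminus J$ and $J\setminus I$, or involving an index outside $I\cup J$---simply never occurs; no interpolation argument is needed there. Conversely, your claimed easy case ``both indices in $I$ contradicts minimality of $z$ as lift of $y$'' is not valid: having both indices in $I$ and being $(I\cap J)$-compatible does \emph{not} make $s_h$ $I$-compatible (the inversion could be the next one in $z|_{I\cap J}$ without being the next one in $z|_I$), so there is no contradiction with $I$-minimality as you assert. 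The paper runs the logic in the opposite direction: from $h<i_{k+1}$ it first deduces that both indices are in $I\cap J$ and that the inversion is not the next one in $z|_I$ nor in $z|_J$, and then combines these to conclude it cannot be the next one in $z|_{I\cap J}$. You should reorganize your inductive step along these lines.
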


\begin{proof}
    By Definition~\ref{defn:incexcstructure}, we need to show that given $w\in S_n$ and index sets $I,J\subseteq [1,n]$, we have
    \[ \iota_{I\cap J}(\RW(w|_{I\cap J})) = \iota_I(\RW(w|_I))\cap \iota_J(\RW(w|_J)), \]
    where $\iota_A$ is the minimal lift from $\RW(w|_A)\hookrightarrow \RW(w)$. We proceed by checking both containments.

    On one hand, Proposition~\ref{prop:composition} implies that the composition of minimal lifts
    \[ \RW(w|_{I\cap J})\hookrightarrow \RW(w|_I)\hookrightarrow \RW(w) \]
    coincides with $\iota_{I\cap J}$. This implies that $\iota_{I\cap J}(\RW(w|_{I\cap J}))\subseteq \iota_I(\RW(w|_I))$. The same argument shows that $\iota_{I\cap J}(\RW(w|_{I\cap J}))\subseteq \iota_J(\RW(w|_J))$ as well, which establishes one of the desired containments.
    
    For the other direction, let $x = s_{i_\ell}\cdots s_{i_1}$ be a reduced word for $w$ in $\iota_I(\RW(w|_I))\cap \iota_J(\RW(w|_J))$. We write $x|_I$, $x|_J$, and $x|_{I\cap J}$ for the restrictions of $x$ to reduced words on the patterns of $w$ given by the respective sets of indices. Note that we must have $x = \iota_I(x|_I)$ and $x=\iota_J(x|_J)$. Also, the further restrictions of $x|_I$ and $x|_J$ to reduced words for $w|_{I\cap J}$ both coincide with $x|_{I\cap J}$ because the order of the inversions among pairs of indices in $I\cap J$ are both induced from $x$. 
    
    We will show that $x$ is the minimal lift $\iota_{I\cap J}(x|_{I\cap J})$ by induction on the length of their initial (right) segments that agree. Indeed, that some (possibly empty) initial segment $s_{i_k}\cdots s_{i_1}$ of $x$ coincides with the initial segment of $\iota_{I\cap J}(x|_{I\cap J})$ of the same length. The extensions of this partial reduced word for $w$ are those letters $s_i$ for which we have $\ell(ws_{i_1}\cdots s_{i_k}s_i)<\ell(ws_{i_1}\cdots s_{i_k})$. Since $x$ is the minimal lift of $x|_I$, the index $i_{k+1}$ of the next transposition in $x$ is the smallest index among extensions (as a partial word of $w$) that either creates an inversion involving an index outside of $I$ or restricts to the next inversion in the reduced word $x|_I$. Similarly, because we also have $x=x|_J$, $i_{k+1}$ is also the smallest index among extensions that either create an inversion involving an index outside of $J$ or restrict the next inversion in the reduced word $x|_J$. Combining these conditions, the index $i_{k+1}$ must be the smallest index among extensions that either create an inversion involving an index in $I\cap J$ or give the next inversion in $x|_{I\cap J}$. This is, by definition, the index of the next inversion in $\iota_{I\cap J}(x|_{I\cap J})$, so we may conclude by induction that $x = \iota_{I\cap J}(x|_{I\cap J})$. This completes the proof.
\end{proof}

As in the previous section, we say that a reduced word for $w$ is an \textit{essential reduced word} if it is not in the image of any minimal lift map $\RW(p)\hookrightarrow \RW(w)$ for a proper pattern $p$ in $w$. Combining Proposition~\ref{prop:positivitycriterion} with Theorem~\ref{thm: reduced word structure} yields the following result.

\begin{corollary}
    The number of reduced words is a pattern-positive statistic, and the coefficient on $[p]$ in the pattern expansion is the number of essential reduced words for $p$.
\end{corollary}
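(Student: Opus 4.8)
The plan is to derive this corollary directly by feeding the inclusion-exclusion structure constructed in this section into the general positivity criterion. Concretely, take the enumerative statistic $\rw(w) = |\RW(w)|$, and recall from Theorem~\ref{thm: reduced word structure} that the minimal lift maps $\iota_I\colon \RW(w|_I)\hookrightarrow \RW(w)$, indexed by pattern occurrences $w|_I$ of $w$, form an inclusion-exclusion structure in the sense of Definition~\ref{defn:incexcstructure}. I would first note that each $\iota_I$ is genuinely an injection: it is a right-inverse to the restriction map $\pi\colon \RW(w)\to \RW(w|_I)$, since by construction $\pi(\iota_I(x)) = x$ for every $x\in \RW(w|_I)$, so $\iota_I$ has a left inverse and is therefore injective. (This is implicit in the surrounding discussion but worth stating once, as Proposition~\ref{prop:positivitycriterion} is phrased for families of inclusions.)

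With that in hand, Proposition~\ref{prop:positivitycriterion} applies verbatim: any enumerative statistic equipped with an inclusion-exclusion structure is pattern-positive, with the coefficient $\coeff{\rw}{p}$ equal to $|\Eset(p)|$, the number of elements of $\RW(p)$ lying in no image $\iota_I(\RW(p|_I))$ for a proper pattern $p|_I$ of $p$. The final step is purely a matter of matching terminology: the set $\Eset(p)$ from Definition~\ref{defn:incexcstructure}, specialized to $\set = \RW$ and to the minimal-lift inclusions, is by definition exactly the set of \emph{essential reduced words} for $p$ as defined just before the corollary statement. Hence $\coeff{\rw}{p}$ equals the number of essential reduced words for $p$, and all these coefficients are nonnegative, so $\rw$ is pattern-positive.

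There is essentially no new obstacle here — the substantive work has already been done in Proposition~\ref{prop:composition} (compatibility of minimal lifts under composition) and Theorem~\ref{thm: reduced word structure} (the intersection condition), both of which feed into establishing that we indeed have an inclusion-exclusion structure. If I were to flag anything to be careful about, it would be the verification that the \emph{proper} patterns $p$ ranging in the definition of $\Eset$ (excluding both $\varnothing$ and $p$ itself) are precisely the ones excluded in the definition of essential reduced word, so that no off-by-one mismatch occurs between ``essential element'' and ``essential reduced word''; this is immediate from the two definitions but is the one place a reader might want a sentence of reassurance.
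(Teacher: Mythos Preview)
Your proposal is correct and matches the paper's approach exactly: the paper simply states that the corollary follows by combining Proposition~\ref{prop:positivitycriterion} with Theorem~\ref{thm: reduced word structure}, and your write-up spells out precisely that combination plus the terminological identification of $\Eset(p)$ with the set of essential reduced words. The extra remarks you include (injectivity of $\iota_I$ via its left inverse $\pi$, and the check that ``proper pattern'' lines up in both definitions) are sound clarifications that the paper leaves implicit.
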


In fact, we have an even more refined combinatorial interpretation for this pattern expansion. For each reduced word $x\in RW(w)$, define the \textit{essential index set} $I\subseteq [1,n]$ to be the set of indices $i$ such that $x$ is \textit{not} in the image $\RW(w|_{\{1,\dots,\hat{i},\dots,n\}})\xhookrightarrow{\iota} \RW(w)$. The previous results imply that the restriction $y\in \RW(p)$ of $x$ to $p=w|_I$ is an essential reduced word for $p$, and that $x = \iota(y)$. We can use this to obtain a combinatorial interpretation for every term in the expression given by computing $\rw(w)$ via the pattern expansion of $\rw$ as follows.

Partition the set of reduced words $\RW(w)$ into sets $\RW_I(w)$ indexed by $I\subseteq [1,n]$ consisting of those reduced words whose essential index set is $I$. In the pattern expansion $\rw(w) = \cdots+ \coeff{\rw}{p}[p](w)+\cdots$, each pattern occurrence $p = w|_{I}$ contributes $\coeff{\rw}{p}$, the number of essential reduced words for $p$, to the resulting expression for $\rw(w)$. We identify this term as corresponding to the subset $\RW_I(w)\subseteq \RW(w)$, giving a complete combinatorial interpretation for the decomposition of $\rw(w)$ given by the pattern expansion of $\rw$.

\begin{example}
    The permutation $w=3421$ has $5$ reduced words, depicted in Figure~\ref{fig:five wiring diagrams} as wiring diagrams with the dashed wires corresponding non-essential indices.
    These correspond to the terms in the expression
    \begin{align*}
        \rw(3421) & = (1+[321]+2[3421]+\cdots)(3421) \\
        & = 1 + [321](3421) + 2 [3421](3421)\\
        & = 1 + 2 + 2 = 5.
    \end{align*} 

    For example, $s_2 s_1 s_2$ is the unique essential reduced word for the permutation $p=321$. Each of the two occurrences of $321$ as a pattern in $w = 3421$ induces one reduced word for $w$, the minimal lift of $s_2 s_1 s_2$ from the $p$-pattern to $w$. 
    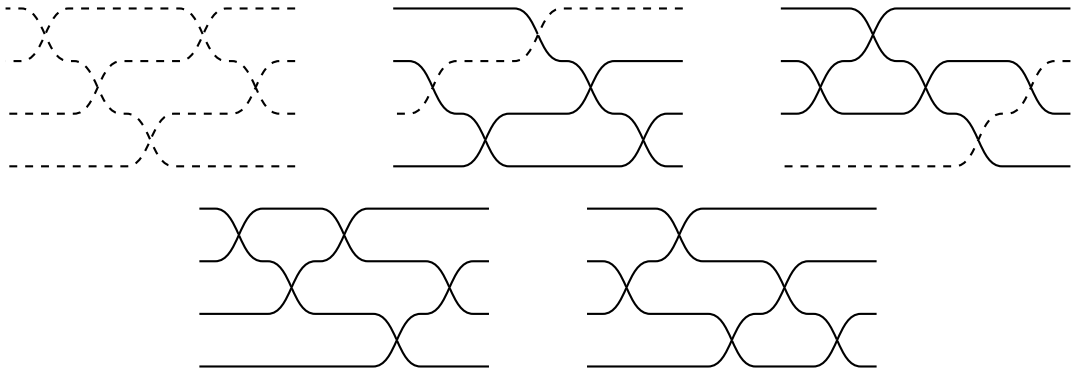
\begin{figure}[htbp]
        \centering
        \begin{tikzpicture}
            \pic[
            rotate=-90,
            braid/.cd,
            every strand/.style={thick},
            strand 1/.style={dashed},
            strand 2/.style={dashed},
            strand 3/.style={dashed},
            strand 4/.style={dashed},
            gap = 0.01,
            scale = .7] (coords)
            {braid={s_2 s_1 s_3 s_2 s_1 }};
        \end{tikzpicture}
                \hspace{1cm}
        \begin{tikzpicture}
            \pic[
            rotate=-90,
            braid/.cd,
            every strand/.style={thick},
            strand 1/.style={dashed},
            strand 2/.style={black},
            strand 3/.style={black},
            strand 4/.style={black},
            gap = 0.01,
            scale = .7] (coords)
            {braid={s_3 s_2 s_1 s_3 s_2 }};
        \end{tikzpicture}
                \hspace{1cm}
        \begin{tikzpicture}
            \pic[
            rotate=-90,
            braid/.cd,
            every strand/.style={thick},
            strand 1/.style={black},
            strand 2/.style={dashed},
            strand 3/.style={black},
            strand 4/.style={black},
            gap = 0.01,
            scale = .7] (coords)
            {braid={s_2 s_3 s_2 s_1 s_2 }};
        \end{tikzpicture}
        
        \vspace{.5cm}
        
        \begin{tikzpicture}
            \pic[
            rotate=-90,
            braid/.cd,
            every strand/.style={thick},
            strand 1/.style={black},
            strand 2/.style={black},
            strand 3/.style={black},
            strand 4/.style={black},
            gap = 0.01,
            scale = .7] (coords)
            {braid={s_2 s_3 s_1 s_2 s_1 }};
        \end{tikzpicture}
                \hspace{1cm}
        \begin{tikzpicture}
            \pic[
            rotate=-90,
            braid/.cd,
            every strand/.style={thick},
            strand 1/.style={black},
            strand 2/.style={black},
            strand 3/.style={black},
            strand 4/.style={black},
            gap = 0.01,
            scale = .7] (coords)
            {braid={s_3 s_2 s_3 s_1 s_2 }};
        \end{tikzpicture}
        \caption{The five wiring diagrams for the permutation $3421$ with dashed wires corresponding to nonessential indices, corresponding to the expansion of $\rw(3421)$ from the pattern expansion of $\rw$.}
        \label{fig:five wiring diagrams}
    \end{figure}
\end{example}

\section{Further directions}\label{section:future}

This work opens up many questions about permutation statistics, the tools that are available for studying them, the utility of permutation patterns as a mechanism for studying other topics (i.e., not just as interesting phenomena for their own sake), and the potential impact of those patterns on broader properties. While we could not possible list all follow-up questions in these, and other, avenues, we suggest a few of them here.

What are the pattern expansions of other well-studied permutation statistics listed in \cite{findstat}? For which statistics in the literature are the expansions pattern-finite or pattern-positive? Which are both? How can calculations that are easy to perform in the context of a pattern expansion, such as the expected value of a statistic, shed light on properties of the statistic itself? 
How can different statistics, in the common language of pattern expansions, be held up against each other (\`a la \cite{BermanTenner}), and what questions can such a comparison answer? What additional properties are shared by pattern-finite (or pattern-positive) expansions? Similarly, what other classes of pattern expansions have useful properties and applications? Additionally a statistic $\stat$ induces a second-order statistic, namely $p \mapsto \coeff{\stat}{p}$. What properties does this iterative process have?

Finally, it would be interesting to examine if there is any connection between the pattern-positivity of the reduced word statistic and the conjectural pattern-positivity of the Schubert specialization statistic.
One might further wonder whether other related statistics also exhibit this pattern-positivity.
For instance, computational evidence suggests that for each $k\in \N$, the permutation statistic $\mathsf{hw}_k$ whose value on $w$ is the number of \textit{Hecke words} (or \textit{Demazure words}) of $w$ of length $\ell(w) + k$ is pattern-positive (see e.g. \cite{Weigandt21} for a definition of Hecke word).
We remark that $\mathsf{hw}_0 = \rw$, so this would generalize the pattern-positivity of the reduced word statistic.

\nocite{*}
\bibliographystyle{plain}
\bibliography{peps.bib}

\end{document}